\documentclass[10pt, leqno]{amsart}

\pagestyle{plain}
\usepackage{amsmath}
\usepackage{amssymb}
\usepackage{amsthm}
\usepackage{mathrsfs}
\usepackage[dvips]{graphicx}
\usepackage{psfrag}
\usepackage[hang,small,bf]{caption}
\usepackage{amsaddr}

\newtheorem{theorem}{Theorem}
\newtheorem{corollary}[theorem]{Corollary}

\newtheorem{proposition}[theorem]{Proposition}
\newtheorem{lemma}[theorem]{Lemma}

\newtheorem{remark}[theorem]{Remark}

\numberwithin{equation}{section}
\numberwithin{theorem}{section}

\newcommand{\ent}{{H}}
\newcommand{\capacity}{{\textup{cap}}}
\newcommand{\size}{\langle  R_n \rangle_o}
\newcommand{\sizebd}{\langle  \partial_i R_n \rangle_o}
\newcommand{\ut}{\alpha}
\newcommand{\entt}{{H(R_n)}}
\newcommand{\sizet}{\langle  R_n \rangle}

\begin{document}

\title{\textbf{Entropy of random walk range on uniformly transient and on uniformly recurrent graphs}}
\author{David Windisch}
\address{The Weizmann Institute of Science\\
Faculty of Mathematics and Computer Science\\
POB 26\\
Rehovot 76100 \\
Israel\\
\vspace{2mm}
\textup{\texttt{david.windisch@weizmann.ac.il}}}
\date{}

\begin{abstract}
We study the entropy of the distribution of the set $R_n$ of vertices visited by a simple random walk on a graph with bounded degrees in its first $n$ steps. It is shown that this quantity grows linearly in the expected size of $R_n$ if the graph is uniformly transient, and sublinearly in  the expected size if the graph is uniformly recurrent with subexponential volume growth. This in particular answers a question asked by Benjamini, Kozma, Yadin and Yehudayoff \cite{BKYY09}. 
\end{abstract}

\maketitle

\section{Introduction}

Let $(X_n)_{n \geq 0}$ be a simple random walk on a graph $G=(V,E)$ starting at some vertex $o \in V$. The entropy of the distribution of $X_n$ for large $n$ has been studied on Cayley graphs and is known to be related to other objects of interest, such as the rate of escape and the existence of non-trivial bounded harmonic functions, see \cite{E03}, \cite{KV83}, \cite{V85}. This work is devoted to the entropy of a similar observable, to that of the range of the random walk. Let $R_n = \{X_0, X_1, \ldots, X_n\}$ be the set of vertices visited by the random walk in its first $n$ steps. The entropy of $R_n$ is defined as 
\begin{align}
\label{e:ent}
\ent_o(R_n) = E_o \Biggl[ \log \biggl( \frac{1}{p_o(R_n)} \biggr) \Biggr],
\end{align}
where the random walk starts at $o \in V$, $P_o$-a.s., $p_o(A) = P_o[R_n=A]$ for any finite connected set $A \subseteq V$ containing $o$, and $0 \log (1/0)$ is defined to be $0$ as usual. According to Shannon's noiseless coding theorem \cite{S48}, $\ent_o(R_n)/ \log 2$ can be interpreted as the approximate number of $0$-$1$-bits required per realization in order to encode a large number of independent realizations of $R_n$ with negligible probability of error. Up to an additive constant, $\ent_o(R_n)/\log 2$ can also be viewed as the expected number of bits necessary and sufficient to encode $R_n$ (see \cite{CT91}, Theorem~5.4.1), and as the expected number of fair coin tosses required for the simulation of $R_n$ (see \cite{CT91}, Theorem~5.12.3).

\medskip

The entropy of $R_n$ for random walk on ${\mathbb Z}^d$, $d \geq 1$, is investigated in a recent work by Benjamini, Kozma, Yadin and Yehudayoff \cite{BKYY09}. There, the authors show that the large $n$ behavior of $\ent_o(R_n)$ is of order $n$ for $d \geq 3$, of order $n/\log^2 n$ for $d=2$ and of order $\log n$ for $d=1$. Comparing this behavior with that of the expected size $\size$ of $R_n$,
\begin{align}
\label{e:size}
\size=E_o[|R_n|],
\end{align}
known to be of order $n$ for $d \geq 3$, of order $n/\log n$ for $d=2$ and of order $\sqrt{n}$ for $d=1$ (cf.~\cite{R05}, p.~221) , we observe that on ${\mathbb Z}^d$, $\ent_o(R_n)$ grows linearly in $\size$ in the transient case and only sublinearly in the recurrent case. On general graphs with bounded degrees, $\ent_o (R_n)$ can grow at most linearly in $\size$ (see Proposition~\ref{p:general}), but the assumptions on recurrence and transience alone do not allow us to conclusively compare $\ent_o(R_n)$ with $\size$. Under slightly stronger assumptions, however, we can generalize the observation just made for ${\mathbb Z}^d$ to large classes of graphs.

\medskip

The graphs we consider in this work are characterized by strengthened transience and recurrence conditions. Recall that a graph is transient if the escape probability $P_o[ o \notin \{X_1, X_2, \ldots \}]$ is strictly positive for any starting vertex $o \in V$. We call a graph $G$ \emph{uniformly transient} if such an estimate holds uniformly in $o$, that is, if
\begin{align}
\label{e:ut}
\inf_{o \in V} P_o[ o \notin \{X_1, X_2, \ldots \}] > 0.
\end{align}
Theorem~\ref{t:entlb} shows that $\ent_o(R_n)$ grows linearly in $\size$ on uniformly transient graphs with bounded degrees. Such a statement does not hold under the assumption of transience alone, see Remark~\ref{r:extrans} for a counterexample.
\begin{theorem} \label{t:entlb}
Let $G$ be a uniformly transient graph with bounded degrees. Then
\begin{align}
\label{e:entlb} \liminf_{n \to \infty} \inf_{o \in V} \frac{\ent_o(R_n)}{\size} > 0.
\end{align}
\end{theorem}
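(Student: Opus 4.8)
The plan is to bound the entropy from below by controlling the conditional uncertainty at each step of the walk. The key insight is that on a uniformly transient graph, each time the walk visits a new vertex, there is a uniformly positive probability that it never returns, so the vertex becomes a permanent ``boundary'' point of the range. This should let me extract linear-in-$\size$ entropy.

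\medskip

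First I would set up a chain-rule or martingale decomposition of $\ent_o(R_n)$. The natural approach is to write the entropy of $R_n$ as a sum over the walk's steps, conditioning on the past trajectory. Concretely, I would try to lower-bound $\ent_o(R_n)$ by the expected number of vertices that are visited exactly once and never returned to, since each such ``escape'' event contributes a genuine binary choice that the encoding must record. The uniform transience assumption \eqref{e:ut} guarantees that conditionally on any history, the probability of escaping from the current vertex is at least $\ut > 0$, so in expectation a positive fraction of the visited vertices are escape points.

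\medskip

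Second, I would relate the count of such escape vertices to $\size$. Since the walk visits $|R_n|$ distinct vertices and each is an escape point with conditional probability bounded below by $\ut$, the expected number of escape points is at least of order $\ut \cdot \size$. The main obstacle, however, is that expected counts do not immediately translate into entropy: I must show that the \emph{distribution} of the range genuinely spreads out, not merely that many escapes occur. To do this I would use the fact that at an escape vertex the walk's next move is essentially a free choice among the neighbors of that vertex, and because degrees are bounded and escapes are ``independent enough'' given the uniform lower bound, one can couple the sequence of escape-direction choices to a sequence of random bits carrying entropy proportional to their number. Making this coupling rigorous, while handling the dependence between successive escape events and the boundedness of the local geometry, is where the real work lies.

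\medskip

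The hard part will be converting the combinatorial lower bound on escape points into a genuine lower bound on $\ent_o(R_n) = E_o[\log(1/p_o(R_n))]$. I expect to need an estimate showing that $p_o(R_n)$ is typically small, perhaps by exhibiting, for each realized range $A$, a large collection of distinct ranges of comparable probability, or by directly bounding $E_o[-\log p_o(R_n)]$ below through the entropy of an auxiliary variable (such as the sequence of escape directions) that is a measurable function of $R_n$. Since entropy only decreases under functions, producing such an auxiliary variable with provably linear entropy would complete the argument. The uniformity of the transience bound and of the degree bound should make all the constants uniform in $o$, which is exactly what the infimum over $o$ in \eqref{e:entlb} requires.
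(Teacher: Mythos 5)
Your proposal has a genuine gap at exactly the point you identify as ``where the real work lies,'' and the specific device you propose to fill it does not work. The sequence of escape directions is \emph{not} a measurable function of $R_n$: the range is an unordered set of vertices and forgets the trajectory entirely (order of visits, directions taken, number of visits). So the principle that entropy decreases under measurable maps cannot be applied to it, and your coupling of escape choices to random bits bounds the entropy of the \emph{trajectory} (which is trivially linear in $n$ on any bounded-degree graph), not the entropy of the range. Indeed, the range can have sublinear entropy even when the walk makes many ``free'' choices, which is the whole difficulty of the theorem.

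The second, related gap is that your notion of escape point --- a vertex visited exactly once and never returned to --- is not the right event: such a vertex can have all of its neighbors visited (think of a vertex of degree $2$ on a path the walk crosses), in which case it leaves no trace whatsoever in the set $R_n$. What actually produces entropy is a vertex of $\partial_i R_n$, i.e.\ a visited vertex with an \emph{unvisited} neighbor: the paper's Lemma~\ref{l:Rbd} (from \cite{BKYY09}) gives $P_o[R_n=A] \leq (1-d^{-1})^{|\partial_i A|-1}$, hence $\ent_o(R_n) \geq c_d\,(\sizebd - 1)$ (Corollary~\ref{c:bdry}), and this is the mechanism converting a count into entropy that your outline lacks. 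To make boundary vertices plentiful one needs branching: the paper first proves (Lemma~\ref{l:d3}) that uniform transience forces a vertex of degree $\geq 3$ within distance $1/\ut$ of every vertex, and then shows that when the walk visits such a vertex $x$ it has probability $\geq c\,\ut$ of exiting through one neighbor and never returning to $B(x,1)$ (via monotonicity of capacity, Lemma~\ref{l:esc}), thereby leaving a third neighbor of $x$ unvisited and $x$ in $\partial_i R_n$. Your argument uses uniform transience only through the escape probability $\ut$ at single vertices and never exploits this geometric consequence; without it, there is no route from ``many escapes'' to ``many possible ranges,'' and your fallback suggestion (exhibiting many ranges of comparable probability) is precisely the statement that remains to be proved.
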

On the other hand, we call a graph $G$ \emph{uniformly recurrent} if
\begin{align}
\label{e:unire}
\sup_{o \in V} P_o [o \notin \{X_1, \ldots, X_n\}] \longrightarrow 0, \text{ as } n \to \infty.
\end{align}
In addition, let $|\partial_e B(x,r)|$ be the number of vertices at distance $r+1$ from $x \in V$ with respect to the usual graph distance. If the degrees of the graph are bounded by $d\geq 2$, then $|\partial_e B(x,r)| \leq d^{r+1}$. The following condition requires slightly more:
\begin{align}
\label{e:vg} \text{for any $\epsilon > 0$, } \sup_{x \in V} |\partial_e B(x,r)| \leq e^{\epsilon r}, \text{ for infinitely many } r \in {\mathbb N}.
\end{align} 
Under these conditions, we prove that $\ent_o(R_n)$ grows only sublinearly in $\size$. We note that recurrence alone does not imply such a statement, see Remark~\ref{r:exrec} and also Remark~\ref{r:ign}.
\begin{theorem} \label{t:entub}
Let $G$ be any uniformly recurrent graph with bounded degrees satisfying \eqref{e:vg}. Then
\begin{align}
\label{e:entub}
\sup_{o \in V} \frac{\ent_o(R_n)}{\size} \longrightarrow 0, \text{ as } n \to \infty.
\end{align}
\end{theorem}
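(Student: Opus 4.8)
The plan is to control the entropy by the expected size $\sizebd$ of the \emph{inner} boundary of the range and then to show that, on a uniformly recurrent graph, this boundary is negligible next to the whole range $\size$. Writing $\rho_m = \sup_{o} P_o[o \notin \{X_1, \ldots, X_m\}]$, which tends to $0$ by \eqref{e:unire}, I would reduce \eqref{e:entub} to the two estimates
\[ \ent_o(R_n) \le C\,\sizebd + C\log n \quad\text{and}\quad \sizebd = o(\size), \]
both uniform in the starting vertex $o$, with $C$ depending only on the degree bound and the growth \eqref{e:vg}. Granting these, \eqref{e:entub} follows once one also checks the auxiliary facts $\inf_o \size \to \infty$ and $\log n = o(\inf_o \size)$, which I would extract from uniform recurrence and connectedness; these improve the merely linear bound of Proposition~\ref{p:general} to the required sublinear one.

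For the geometric estimate I would argue as follows. A visited vertex $x$ lies in $\partial_i R_n$ only if some neighbour stays unvisited, so a union bound over its at most $d$ neighbours together with the strong Markov property at the first hitting time $\tau_x$ gives, after summing $\sum_x P_o[\tau_x = k] = a_k(o)$,
\[ \sizebd \le d \sum_{k=0}^{n} g(n-k+1)\, a_k(o), \]
where $a_k(o) = P_o[X_k \notin \{X_0,\ldots,X_{k-1}\}]$ counts the freshly discovered vertices at step $k$ and $g(m) = \min(d\rho_m,1)$. The factor $g$ appears because a neighbour of a vertex hit at time $k$ fails to be visited in the remaining $n-k$ steps with probability at most $d\rho_{n-k}$, a bound I would derive from \eqref{e:unire} by a one-step comparison between hitting a vertex and returning to it. Now $a_k(o)\le 1$ and $\sum_k a_k(o) = \size$, while $g$ is nonincreasing with $g(m)\to 0$; splitting the convolution at $k=n-M$ yields $\sizebd/\size \le d\,(g(M+1) + M/\size)$. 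Choosing $M=M(n)\to\infty$ slowly enough that $M = o(\inf_o \size)$ drives both terms to $0$ uniformly in $o$, as desired.

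For the entropy bound I would use a coding argument. By the chain rule, $\ent_o(R_n) \le H_o(|R_n|,|\partial_i R_n|) + E_o[\log N(|R_n|,|\partial_i R_n|)]$, where $N(m,b)$ counts connected sets $A\ni o$ with $|A|=m$ and $|\partial_i A|=b$ and the first term is $O(\log n)$. The key structural fact is that any connected $A\ni o$ is recovered from its edge boundary as the component of $o$ in $G$ after the boundary edges are deleted, so $N(m,b)$ is at most the number of admissible cuts of at most $db$ edges separating $o$ from infinity. The crux, which I expect to be the main obstacle, is to show that this count is only exponential in $b$, i.e. $\log N(m,b)\le Cb + C\log n$, rather than exponential in $b\log|B(o,m)|$: an extra factor $\log|B(o,m)|$ per boundary edge would already be fatal on $\mathbb{Z}^2$, where one must land exactly on the order $n/\log^2 n$. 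This is precisely where \eqref{e:vg} enters: for every $\epsilon$ there are arbitrarily large radii $r$ with uniformly subexponential spheres $\sup_x|\partial_e B(x,r)|\le e^{\epsilon r}$, furnishing sparse cuts through which the range must pass and allowing the enclosing boundary to be located and traversed at an amortized cost of $O(1)$ bits per boundary edge. On $\mathbb{Z}^d$ this specializes to the classical estimate that the number of connected subgraphs through a fixed vertex with perimeter $s$ is at most $C^s$; carrying out the analogous contour/exploration count under the mere subexponential growth \eqref{e:vg}, while accounting for possible holes in $R_n$, is the technical heart of the proof. Taking expectations then gives $\ent_o(R_n)\le C\,\sizebd + O(\log n)$, and combining this with the geometric estimate and $\log n = o(\size)$ yields \eqref{e:entub}.
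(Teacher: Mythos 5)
Your first reduction, the geometric estimate $\sup_{o}\sizebd/\size \to 0$, is correct: the strong Markov property at $\tau_x$ together with the comparison $P_x[\tau_y > m] \leq d \sup_z P_z[z \notin \{X_1,\ldots,X_{m+1}\}]$ for $y \sim x$ gives exactly the convolution bound you state, and it closes using $\inf_o \size \to \infty$ (Proposition~\ref{p:rlb}); this part is close in spirit to the paper's bound on $E_o[L]$ in \eqref{entub1.2}. The genuine gap is the entropy estimate $\ent_o(R_n) \leq C\,\sizebd + C\log n$, i.e.\ the cut-counting claim $\log N(m,b) \leq Cb + C\log n$, which you correctly identify as the technical heart but do not prove --- and which is in fact \emph{false} under the hypotheses of the theorem. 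Consider the comb: spine ${\mathbb Z}\times\{0\}$ with an infinite tooth $\{k\}\times {\mathbb Z}_{\geq 0}$ attached at each spine vertex. This graph has degrees at most $3$, polynomial growth (so \eqref{e:vg} holds trivially), and is uniformly recurrent (the worst vertices sit at height $\asymp m^{1/4}$ on a tooth, giving $\sup_x P_x[x \notin \{X_1,\ldots,X_m\}] \asymp m^{-1/4} \to 0$). Since the comb is a tree, a set $A \in {\mathcal C}_o$ is a spine interval together with a tooth segment of height $h_k \geq 0$ over each spine vertex $k$ of the interval, and $|\partial_i A|$ equals the interval length up to an additive $2$: each $k$ contributes exactly one boundary vertex, namely the top of its tooth segment if $h_k \geq 1$ and the spine vertex itself if $h_k = 0$. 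Counting the choices of the heights, the number of such $A$ with $|A| \leq n$ and boundary size $b$ (up to $2$) is at least $\binom{n}{b} \geq (n/b)^b$, so $\log N(m,b) \asymp b \log (n/b)$, not $O(b) + O(\log n)$.

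Moreover, it is not just this route that fails but the inequality itself: for random walk on the comb one has $\sizebd \asymp n^{1/4}$ (one boundary vertex per visited spine site), while conditionally on the excursion-time data the maximal heights of the $\asymp n^{1/4}$ visited teeth are independent, each with point masses of order at most (duration)$^{-1/2}$ and with a positive fraction of teeth receiving time polynomial in $n$; hence $\ent_o(R_n) \geq c\, n^{1/4}\log n$, and no constant $C$ can make $\ent_o(R_n) \leq C \sizebd + C\log n$ hold (the theorem's conclusion of course still holds there, since $\size \geq c\sqrt{n}$ by Proposition~\ref{p:rlb}). The deeper reason your plan cannot be repaired is that \eqref{e:vg} controls the size of \emph{spheres}, not the combinatorics of cutsets, and uniform recurrence carries no rate, so the unavoidable extra $\log n$ per boundary vertex cannot be absorbed into your bound $\sizebd = o(\size)$. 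The paper sidesteps boundary-based coding entirely: by Lemma~\ref{l:path} it covers $R_n$ with balls of radius $r$ centered along an exploration path, encodes the sequence of at most $1 + 2|R_n|/r$ centers at cost $\log s_r$ each --- that is, $(\log s_r)/r$ bits per \emph{visited} vertex, which \eqref{e:vg} makes smaller than any $\epsilon$ along a sequence of radii --- and pays the full $b_r \log 2$ bits only for balls not completely filled, which are few in expectation by uniform recurrence. This ``$\epsilon$ bits per range vertex'' bound is exactly calibrated to the conclusion $\ent_o(R_n) = o(\size)$ and never needs the entropy to be comparable to the boundary.
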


\medskip

The above results apply in particular to all vertex-transitive graphs. Recall that a graph $G$ is \emph{vertex-transitive}, if for every pair of vertices $(x,x')$, there is a bijection $\phi$ from $V$ to $V$ such that $\phi(x)=x'$ and $d(y,y') = d(\phi(y),\phi(y'))$ for all $y, y' \in V$, where $d(.,.)$ denotes the usual graph distance. For vertex-transitive graphs, $\ent_o(R_n)$ and $\size$ do not depend on the starting vertex $o$ of the random walk, so we omit $o$ from the notation. We can deduce the following dichotomy from the results above:
\begin{theorem} \label{t:dich}
Let $G$ be a vertex-transitive graph. 
\begin{align}
\label{e:dich1}
&\text{If } G \text{ is transient} \text{, then }  \liminf_{n \to \infty} \frac{\entt}{\sizet} > 0,\\
\label{e:dich2}
&\text{if } G \text{ is recurrent} \text{, then } \frac{\entt}{\sizet} \longrightarrow 0, \text{ as } n \to \infty.
\end{align}
\end{theorem}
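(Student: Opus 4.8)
The plan is to deduce both implications directly from Theorems~\ref{t:entlb} and \ref{t:entub}, the only real work being to check that vertex-transitivity promotes plain transience and recurrence to their uniform counterparts \eqref{e:ut} and \eqref{e:unire} and supplies the growth bound \eqref{e:vg}. The key structural observation is that every quantity appearing in these hypotheses is insensitive to the base point. Fix a reference vertex $o_0$. For an arbitrary $o \in V$ choose, by vertex-transitivity, a distance-preserving bijection $\phi$ with $\phi(o_0)=o$; since $\phi$ preserves the relation $d(\cdot,\cdot)=1$ in both directions, it is a graph automorphism, and it therefore pushes the simple random walk started at $o_0$ forward to the simple random walk started at $o$ in distribution. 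Consequently $\ent_o(R_n)$, $\size$, the escape probability $P_o[o \notin \{X_1,X_2,\ldots\}]$, the finite-horizon return-failure probability $P_o[o \notin \{X_1,\ldots,X_n\}]$, and the sphere sizes $|\partial_e B(o,r)|$ do not depend on $o$; in particular this justifies suppressing $o$ from the notation $\entt$, $\sizet$ used in the statement. I also record that a vertex-transitive (locally finite) graph is regular, hence has bounded degrees, so both theorems are applicable.

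For the transient case \eqref{e:dich1}, base-point independence gives $\inf_{o \in V} P_o[o \notin \{X_1,X_2,\ldots\}] = P_{o_0}[o_0 \notin \{X_1,X_2,\ldots\}]$, which is strictly positive precisely because $G$ is transient; thus \eqref{e:ut} holds and $G$ is uniformly transient. Theorem~\ref{t:entlb} then yields $\liminf_{n} \inf_{o} \ent_o(R_n)/\size > 0$, and dropping the (now superfluous) infimum over $o$ gives exactly \eqref{e:dich1}.

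For the recurrent case \eqref{e:dich2} I would first verify uniform recurrence \eqref{e:unire}. By base-point independence the supremum there equals $P_{o_0}[o_0 \notin \{X_1,\ldots,X_n\}]$, and the events $\{o_0 \notin \{X_1,\ldots,X_n\}\}$ decrease, as $n \to \infty$, to $\{o_0 \notin \{X_1,X_2,\ldots\}\}$, whose probability vanishes by recurrence; continuity from above gives \eqref{e:unire}. It remains to establish \eqref{e:vg}. Since $\sup_{x}|\partial_e B(x,r)| = |\partial_e B(o_0,r)|$, this reduces to showing that a recurrent vertex-transitive graph has subexponential volume growth. Here I would invoke the classical fact, essentially due to Varopoulos, that recurrence of the simple random walk on a vertex-transitive graph forces at most quadratic volume growth; then $|\partial_e B(o_0,r)| \leq |B(o_0,r+1)| \leq C(r+1)^2 \leq e^{\epsilon r}$ for every $\epsilon>0$ and all large $r$, so \eqref{e:vg} holds (in fact for all large $r$). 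Theorem~\ref{t:entub} now gives $\sup_o \ent_o(R_n)/\size \to 0$, which is \eqref{e:dich2}.

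The genuine obstacle is this last point, the passage from recurrence to subexponential growth; the remaining reductions are pure symmetrization. No elementary argument suffices, because there exist amenable vertex-transitive graphs of exponential growth (with spectral radius $1$), so one cannot cheaply extract transience from a positive Cheeger constant, and the sharp link between volume growth and return probabilities rests on the heat-kernel estimates of Varopoulos and Coulhon--Saloff-Coste. A more hands-on route to the contrapositive would be to show that exponential growth produces a unit flow from $o_0$ to infinity of finite energy comparable to $\sum_r |\partial_e B(o_0,r)|^{-1} < \infty$, whence transience; the delicate part of that argument is routing the flow consistently across successive spheres, which is where the effort would concentrate.
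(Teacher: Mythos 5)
Your reduction is correct and is structurally the same as the paper's: vertex-transitivity renders the infimum in \eqref{e:ut} and the suprema in \eqref{e:unire}, \eqref{e:vg} superfluous, transience then upgrades to uniform transience and Theorem~\ref{t:entlb} gives \eqref{e:dich1}, while recurrence plus monotone convergence gives \eqref{e:unire} and Theorem~\ref{t:entub} gives \eqref{e:dich2}. The one substantive divergence is the step you correctly single out as the obstacle, the verification of \eqref{e:vg}. You invoke the Varopoulos-type theorem that a recurrent vertex-transitive graph has at most quadratic volume growth; this is true and yields \eqref{e:vg} for all large $r$, but it is strictly stronger than what is needed and rests on the heaviest available inputs (heat-kernel estimates together with the structure theory of vertex-transitive graphs of polynomial growth). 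The paper's Corollary~\ref{c:entub} gets away with less: citing Woess (quasi-homogeneity, Theorem~4.18; the resulting isoperimetric inequality, Corollary~4.16; and the transience criterion, Theorem~5.2), it only uses that $\liminf_r |B(o,r)|/r^3 > 0$ would force transience, so recurrence gives $\liminf_r |B(o,r)|/r^3 = 0$ --- a subexponential bound merely along a subsequence of radii, which suffices precisely because \eqref{e:vg} was formulated to require the bound only for infinitely many $r$. In other words, the ``infinitely many $r$'' phrasing of \eqref{e:vg} exists exactly so that this lighter analytic fact is enough, and your proof pays for more than it needs (though what you buy is a cleaner statement: quadratic growth for all $r$). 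One caution on your fallback sketch: finiteness of $\sum_r |\partial_e B(o,r)|^{-1}$ does not imply transience on general bounded-degree graphs --- a half-infinite path with a depth-$k$ binary tree attached at its $k$-th vertex is recurrent yet has exponentially large balls --- so the flow-routing argument cannot work from growth alone and must use transitivity in an essential (isoperimetric) way; as written, the literature citation is genuinely carrying that part of your argument.
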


\medskip

For uniformly transient graphs, $\sizet$ grows linearly in $n$ (see Proposition~\ref{p:Rllnt}). Hence, \eqref{e:dich1} in particular shows that $\entt$ grows linearly in $n$ on all vertex-transitive and transient graphs, thus answering a question asked in \cite{BKYY09}, Section~3.3.

\medskip

We now comment on the proofs, starting with the one of Theorem~\ref{t:entlb}, given in Section~\ref{s:trans}. A simple observation made in \cite{BKYY09} shows that $\ent_o(R_n)$ is bounded from below by a constant times the expected size of the interior boundary $\partial_i R_n$ of $R_n$. It thus suffices to prove a lower bound of order $\size$ on $E_o[|\partial_i R_n|]$, or, equivalently, to prove that the fraction of the visited vertices belonging to $\partial_i R_n$ is non-degenerate. Note that a vertex belongs to $\partial_i R_n$ if it is visited by the random walk, but at least one of its neighbors is not. Not all visited vertices are equally likely to belong to $\partial_i R_n$. Indeed, a vertex in the middle of a long path of vertices of degree $2$ is very unlikely to end up in the boundary of $R_n$, because the random walk typically returns many times and visits both of its neighbors before escaping from it. In order to avoid such situations, we observe that on uniformly transient graphs, vertices of degree at least $3$ exist within a bounded distance of every vertex (cf.~Lemma~\ref{l:d3}), and use uniform transience to prove that whenever the random walk visits such a vertex $x$, there is a non-degenerate probability that the walk then escapes from the ball $B(x,1)$ of radius $1$ around it and leaves $x$ in $\partial_i R_n$. This yields the lower bound on $E_o[|\partial_i R_n|]$ required to prove Theorem~\ref{t:entlb}. In order to show that transience alone is not a sufficient assumption for the conclusion of Theorem~\ref{t:entlb}, we consider a binary tree, where every edge at depth $l$ is replaced by a path of length $l+3$, and show that this graph satisfies \eqref{e:entub}, see Remark~\ref{r:extrans} and Figure~\ref{fig:s}.

\medskip

In the proof of Theorem~\ref{t:entub} in Section~\ref{s:rec}, we cover every possible realization of $R_n$ with one of at most $\sup_{x \in V} |\partial_e B(x,r)|^{(const.)|R_n|/r}$ different collections of balls of radius $r \geq 1$. Uniform recurrence implies that most of these balls are typically completely covered by $R_n$. We then consider the conditional entropy of $R_n$, given the number $K$ of such balls intersected by $R_n$ and the number $L$ of balls that are not completely filled by $R_n$ (the definition of conditional entropy is recalled in \eqref{e:cent} below). The conditional entropy can then be bounded by the expected logarithm of the number of possible configurations $R_n$ can belong to, given $(K,L)$ (cf.~\eqref{e:cond}). By assumption~\eqref{e:vg}, the expected logarithm of the number of possible collections of covering balls can be made smaller than $\epsilon \size$ for any fixed $\epsilon>0$ by choosing $r$ appropriately, while the assumption of uniform recurrence yields a similar bound on the expected logarithm of the number of possible choices of the exact configurations of $R_n$ in the few unfilled balls. This argument proves the required estimate on the conditional entropy of $R_n$ given $(K,L)$. Since the entropy of $(K,L)$ itself is only of order at most $\log n$, this is sufficient for Theorem~\ref{t:entub}. Theorem~\ref{t:entub} cannot be proved for every recurrent graph. As a counterexample, we consider a sequence of finite binary trees with rapidly increasing depths, connected together by a half-infinite path, see Remark~\ref{r:exrec} and Figure~\ref{fig:c}. 
\medskip

The article is organized as follows: In Section~\ref{s:not}, we introduce notation and preliminary results on entropy, derive a general lower bound on $\size$ and show that $\ent_o(R_n)$ grows at most linearly in $\size$. Section~\ref{s:trans} contains the proof of Theorem~\ref{t:entlb} and Section~\ref{s:rec} proves Theorem~\ref{t:entub}. Finally, the dichotomy for vertex-transitive graphs in Theorem~\ref{t:dich} is deduced in Section~\ref{s:dich}.

\medskip

\textbf{Acknowledgement.} The author is grateful to Itai Benjamini for helpful conversations and to an anonymous referee for helpful remarks.

\section{Notation and preliminaries} \label{s:not}

In this section, we introduce the notation and prove some preliminary results. These include a lower bound of order $\sqrt{n}$ on the expected size of $R_n$ on a general infinite graph with bounded degrees, obtained in Proposition~\ref{p:rlb} by adapting an argument in \cite{NP08}, as well as the observation that $\ent_o(R_n)$ grows at most as fast as the expected size $\size$ on any infinite connected graph with bounded degrees. 

\medskip

Throughout this article, we let $G=(V,E)$ be a graph. $V$ denotes the set of vertices and $E$ the set of edges consisting of unordered pairs of vertices in $V$. Whenever $\{x,y\} \in E$, we write $x \sim y$ and call the vertices $x$ and $y$ neighbors. The number of neighbors of a vertex $x$ is always assumed to be finite and referred to as the degree of $x$, denoted $\deg(x)$.  A path of length $l$ is a sequence $(x_0, x_1, \ldots, x_l)$ of vertices in $V$ such that $x_i \sim x_{i+1}$ for $0 \leq i \leq l-1$. The distance $d(x,y)$ between any two vertices $x$ and $y$ is defined as the length of the shortest path starting at $x_0=x$ and ending at $x_{d(x,y)}=y$, and defined to be $\infty$ if no such path exists. $G$ is said to be connected if $d(x,y)< \infty$ for all $x,y \in V$. For any $x \in V$, $r \geq 0$, we define the ball $B(x,r) = \{y \in V: d(x,y) \leq r\}$. The graph $G$ has bounded degrees if $\sup_{x \in V} \deg(x) \leq d$ for some $d \geq 1$. For any set $A$ of vertices, we define the interior and exterior boundaries of $A$ by $\partial_i A = \{x \in A: x \sim y \text{ for some } y \in V \setminus A\}$ and $\partial_e A = \partial_i (V \setminus A)$. The subgraph $G(A)=(A,E(A))$ induced by $A \subseteq V$ consists of the vertices in $A$ and the set of edges $E(A) = \{\{x,y\} \in E: \{x,y\} \subseteq A\}$. We say that $A$ is connected if $G(A)$ is connected in the above sense. The cardinality of any set $A$ is denoted by $|A|$, the largest integer less than $a \in {\mathbb R}$ by $[a]$ and the minimum of numbers $a, b \in {\mathbb R}$ by $a \wedge b$. Throughout the text, $c$ or $c'$ are used to denote strictly positive constants with values changing from place to place. Dependence of constants on additional parameters appears in the notation. For example, $c_d$ denotes a positive constant possibly depending on $d$.

\medskip

For any $x \in V$, we denote by $P_x$ the law on $V^{\mathbb N}$ (equipped with the canonical $\sigma$-algebra generated by the coordinate projections) of the Markov chain on $V$ starting at $x$ with transition probability 
\begin{align*}
p(x,y) = \left\{ \begin{array}{ll} 1/\deg(x), &\text{if } x \sim y, \\
0, & \text{otherwise,}
\end{array} \right.
\end{align*}
and write $(X_n)_{n \geq 0}$ for the canonical coordinate process, referred to as the simple random walk on $G$. The corresponding expectation is denoted by $E_x$, the canonical shift-operators on $V^{\mathbb N}$ by $(\theta_n)_{n \geq 0}$. Note that $\deg(x) p(x,y) = \deg(y) p(y,x)$, meaning that the measure $\pi: A \mapsto \sum_{x \in A} \deg(x)$ on $V$ is a reversible measure for the simple random walk. The first entrance and hitting times of a set $A$ of vertices are defined as
\begin{align}
\label{e:hit}
\tau_A = \inf\{ n \geq 0: X_n \in A\}, \, \tau^+_A = \inf \{ n \geq 1: X_n \in A\},
\end{align}
where we write $\tau_x$ rather than $\tau_{\{x\}}$ if $A$ consists of a single element $x$. The capacity of a finite nonempty subset $A$ of $V$ is defined as
\begin{align}
\label{e:cap}
\capacity(A) = \sum_{x \in A} P_x[\tau^+_A = \infty] \deg(x).
\end{align}
We will generally refer to the starting vertex of the random walk as $o$. It will be convenient to define the collection ${\mathcal C}_o$ as
\begin{align}
\label{e:Co}
{\mathcal C}_o = \{A \subseteq V: A \text{ is finite, connected, and } o \in A\}.
\end{align}
Note that $R_n \in {\mathcal C}_o$, $P_o$-a.s. The entropy of the range $R_n$ of the random walk has been defined in \eqref{e:ent}. More generally, for any random variable $X$ taking values in a countable set $\mathcal X$, we define the entropy of $X$ by
\begin{align*}
\ent_o(X) = E_o \bigg[ \log \bigg( \frac{1}{p_o(X)} \bigg) \bigg], \text{ where } p_o(x) = P_o[X=x], \text{ for } x \in {\mathcal X}.
\end{align*}
Given another random variable $Y$ taking values in a countable set $\mathcal Y$, the conditional entropy of $X$ given $Y$ is defined by
\begin{align}
\label{e:cent}
\ent_o(X|Y) = \ent_o \big((X,Y) \big) - \ent_o (Y).
\end{align}
An application of Jensen's inequality shows that
\begin{align}
\label{e:entcomb}
\ent_o(X) \leq \log |{\mathcal X}|,
\end{align}
while the following estimate is elementary:
\begin{align}
\label{e:condest}
\ent_o(X) \leq \ent_o(X|Y) + \ent_o(Y).
\end{align}
Moreover, we have the following useful lemma:

\begin{lemma}
\label{l:cond}
For random variables $X$ and $Y$ with values in countable sets $\mathcal X$ and $\mathcal Y$, 
\begin{align}
\label{e:cond}
&\ent_o(X|Y) \leq E_o[ \log |{\mathcal X}_Y|], \text{ where}\\
&{\mathcal X}_y =  \{ x \in {\mathcal X} : P_o[(X,Y)=(x,y)]>0\} , \text{ for } y \in {\mathcal Y}. \nonumber
\end{align}
\end{lemma}

\begin{proof}
Using Jensen's inequality, we have
\begin{align*}
&\ent_o(X|Y) = \sum_{\substack{(x,y) \in {\mathcal X} \times {\mathcal Y}: \\ P_o[(X,Y)=(x,y)]>0}} P_o \big[ (X,Y) = (x,y) \big] \log \bigg( \frac{P_o [ Y= y ]}{P_o \big[ (X,Y) = (x,y) \big]} \bigg)  \\
&\quad= \sum_{y \in {\mathcal Y}} P_o[Y=y] \sum_{x \in {\mathcal X}_y} P_o[X=x|Y=y] \log \bigg( \frac{1}{P_o[X=x|Y=y]} \bigg)\\
&\quad \leq E_o[ \log |{\mathcal X}_Y|]. \qedhere
\end{align*}
\end{proof}

The following simple lemma, combined with a covering argument, will be instrumental in proving the bound on $\ent_o(R_n)$ in Theorem~\ref{t:entub}.

\begin{lemma} \label{l:path}
Let $G=(V,E)$ be any graph with bounded degrees, $o \in V$, and let $A \in {\mathcal C}_o$  \textup{(}cf.~\eqref{e:Co}\textup{)}. Then there exists a nearest-neighbor path in $G(A)$ starting at $o$, covering $A$, and of length at most $2|A|$. 
%Moreover, this path can be found by an algorithm at most $c_d|A|$ steps.
\end{lemma}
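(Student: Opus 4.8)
The plan is to reduce the statement to a standard fact about trees and then establish that fact by a depth-first traversal argument. Since $A$ is connected, the induced subgraph $G(A)$ is connected, so it possesses a spanning tree $T$ on the vertex set $A$, having exactly $|A|-1$ edges. Every edge of $T$ is an edge of $G(A)$, so any nearest-neighbor walk in $T$ is in particular a nearest-neighbor path in $G(A)$ in the sense of the definition in Section~\ref{s:not}, where repetitions of vertices are permitted. It therefore suffices to produce, starting from $o$, a walk in $T$ that visits every vertex of $A$ and has length at most $2|A|$.

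I would prove the following claim by induction on $|A|$: for any tree $T$ on a finite vertex set $A$ and any $o \in A$, there is a closed walk starting and ending at $o$, visiting every vertex of $A$, and traversing each edge of $T$ exactly twice, so that its length equals $2(|A|-1)$. The base case $|A|=1$ is the trivial walk of length $0$. For the inductive step with $|A| \geq 2$, I would choose a leaf $\ell \neq o$ of $T$, which exists since a tree with at least two vertices has at least two leaves, and let $w$ be its unique neighbor. Removing $\ell$ yields a tree $T'$ on $A \setminus \{\ell\}$, to which the induction hypothesis applies, giving a closed walk $W'$ from $o$ of length $2(|A|-2)$ visiting every vertex of $A \setminus \{\ell\}$. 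Since $W'$ passes through $w$, inserting the detour $w \to \ell \to w$ at one such visit produces a closed walk $W$ from $o$ visiting all of $A$, of length $2(|A|-2)+2 = 2(|A|-1)$, and traversing each edge of $T$ exactly twice.

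Applying the claim to a spanning tree $T$ of $G(A)$ produces a nearest-neighbor path in $G(A)$ starting at $o$, covering $A$, of length $2(|A|-1) \leq 2|A|$, as required. One could trim the final return portion of the walk after the last newly visited vertex to slightly improve the bound, but this is unnecessary. I do not anticipate a genuine obstacle here: the only point requiring care is the bookkeeping in the detour insertion, namely that splicing $w \to \ell \to w$ in at a visit of $w$ keeps the sequence a valid nearest-neighbor walk and increases its length by exactly $2$. The bounded-degree hypothesis plays no role beyond guaranteeing finiteness, which is already built into $A \in {\mathcal C}_o$.
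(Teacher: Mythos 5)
Your proof is correct and takes essentially the same approach as the paper: both arguments pass to a spanning tree of $G(A)$ and produce a walk traversing each tree edge twice, giving length $2(|A|-1) \leq 2|A|$. The only difference is presentational — you establish the tree-traversal fact by a self-contained leaf-removal induction, whereas the paper simply cites the depth-first search algorithm for the same walk.
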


\begin{proof}
The standard depth-first search algorithm (see \cite{CLR01}) yields a spanning tree $T_A=(A,E_T)$ of $G(A)$ (i.e.~a tree with vertices $A$ and edges $E_T \subseteq E(A)$), as well as a nearest-neighbor path in $T_A$ covering $A$ and traversing every edge in $E_T$ at most once in every direction. Since the length of such a path is bounded from above by $2|E_T| = 2(|A|-1) < 2|A|$, the statement follows.
\end{proof}

We now prove a general lower bound on the expected range of random walk on an infinite connected graph with bounded degrees. This lemma will allow us to disregard small errors when relating $\ent_o(R_n)$ to $\size$.

\begin{proposition} \label{p:rlb}
Let $G=(V,E)$ be any infinite connected graph with bounded degrees. Then
\begin{align}
\label{e:rlb} \liminf_{n \to \infty} \inf_{o \in V} \frac{\size}{\sqrt{n}} > 0.
\end{align}
\end{proposition}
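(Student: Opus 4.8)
The plan is to bound $\size = E_o[|R_n|]$ from below by controlling the expected number of returns to visited vertices, following the approach attributed to Naddaf--Peres (referenced as \cite{NP08}). The elementary identity is that
\begin{align*}
\size = E_o[|R_n|] = \sum_{k=0}^n P_o[X_k \notin \{X_0, \ldots, X_{k-1}\}],
\end{align*}
so it suffices to show that the walk discovers new vertices often enough. A cleaner route is to relate the range to the occupation measure: if $L_n(x) = \sum_{k=0}^n \mathbf{1}\{X_k = x\}$ denotes the number of visits to $x$ up to time $n$, then $\sum_{x \in V} L_n(x) = n+1$, and by Cauchy--Schwarz,
\begin{align*}
(n+1)^2 = \Bigl( \sum_{x \in R_n} L_n(x) \Bigr)^2 \leq |R_n| \sum_{x \in R_n} L_n(x)^2.
\end{align*}
Taking expectations and applying this inequality, the main task reduces to producing an upper bound of order $n$ on $E_o\bigl[\sum_x L_n(x)^2\bigr]$, since then $E_o[|R_n|] \gtrsim (n+1)^2 / E_o[\sum_x L_n(x)^2] \gtrsim \sqrt{n}$ after one more application of Cauchy--Schwarz (or Jensen) to pass the expectation through the ratio.

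First I would compute $E_o\bigl[\sum_x L_n(x)^2\bigr]$ explicitly. Expanding the square, $\sum_x L_n(x)^2 = \sum_{j,k=0}^n \mathbf{1}\{X_j = X_k\}$, so taking expectations and using the Markov property gives a sum over pairs $(j,k)$ of return probabilities $P_{X_j}[X_{|k-j|} = X_j]$. The key point is that on a graph with bounded degrees $d$, the heat kernel satisfies a uniform on-diagonal bound of the form $\sup_{x \in V} P_x[X_m = x] \leq c_d / \sqrt{m}$ for $m \geq 1$; this is where the bounded-degree hypothesis enters decisively. Summing $\sum_{m=1}^n 1/\sqrt{m} = O(\sqrt{n})$ over the $O(n)$ pairs would naively give $n^{3/2}$, which is too weak, so I must instead organize the double sum more carefully: for each starting time $j$, the inner sum $\sum_{m=0}^{n-j} P_{X_j}[X_m = X_j] \leq 1 + c_d \sqrt{n}$, and there are $n+1$ choices of $j$, yielding $E_o[\sum_x L_n(x)^2] \leq c_d \, n^{3/2}$. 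Feeding this back gives only $E_o[|R_n|] \gtrsim n^{1/4}$, which is weaker than claimed.

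The hard part, and the place where I expect the real obstacle, is therefore sharpening the second-moment control to get the correct $\sqrt{n}$ rather than $n^{1/4}$. The resolution is to avoid squaring the local time and instead bound $|R_n|$ directly via the expected total number of visits to the \emph{current} range. The right quantity to estimate is $E_o\bigl[\sum_{x \in R_n} L_n(x)\bigr] = n+1$ paired with a bound on the maximal local time or, more precisely, to use that for each visited vertex the expected number of subsequent returns is controlled by the Green-type sum. Concretely, I would write $|R_n| = \sum_{x \in R_n} 1$ and compare it to $\sum_{x \in R_n} L_n(x) = n+1$ by showing $E_o\bigl[\max_x L_n(x)\bigr]$ or rather a weighted average is $O(\sqrt{n})$: the cleanest form is to establish $n+1 = \sum_x E_o[L_n(x)\,\mathbf{1}\{x \in R_n\}] \leq E_o[|R_n|]^{1/2} \, E_o[\sum_x L_n(x)^2]^{1/2}$ by Cauchy--Schwarz over the probability space, so that $(n+1)^2 \leq E_o[|R_n|] \cdot E_o[\sum_x L_n(x)^2]$, and the heat-kernel computation above then needs to give the genuinely sharp $E_o[\sum_x L_n(x)^2] = O(n^{3/2})$ with a matching lower bound of the range being of order $\sqrt n$; verifying that the uniform heat-kernel bound $P_x[X_m=x] \leq c_d/\sqrt m$ holds on \emph{every} bounded-degree graph (this follows from the Carne--Varopoulos estimate together with $\pi$-reversibility and the spectral representation of the reversible chain) is the technical crux, since without a uniform on-diagonal decay the argument collapses. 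Once that uniform heat-kernel bound is in hand, the chain of Cauchy--Schwarz inequalities delivers \eqref{e:rlb} with a constant depending only on the degree bound $d$, uniformly in the starting vertex $o$.
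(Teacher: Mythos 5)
Your final chain of inequalities is correct and does prove the proposition, but by a genuinely different route from the paper. The paper (adapting the Naor--Peres argument it cites) introduces the successive discovery times $N_1 < N_2 < \cdots$ of new vertices and applies Markov's inequality to the time spent in $\{X_{N_1},\ldots,X_{N_k}\}$: on the event $\{|R_n| \leq k\}$ all $n+1$ steps lie in that set, so the strong Markov property at the times $N_i$ gives $P_o[|R_n| \leq k] \leq \frac{k}{n} \sum_{l \leq n} \sup_x P_x[X_l = x] \leq c_d k/\sqrt{n}$, and choosing $k \approx \sqrt{n}/(2c_d)$ finishes. You instead work with local times: pathwise Cauchy--Schwarz gives $n+1 = \sum_{x \in R_n} L_n(x) \leq |R_n|^{1/2} \bigl( \sum_x L_n(x)^2 \bigr)^{1/2}$, then Cauchy--Schwarz on the probability space gives $(n+1)^2 \leq E_o[|R_n|] \, E_o \bigl[ \sum_x L_n(x)^2 \bigr]$, and the double-sum computation $E_o[\sum_x L_n(x)^2] = \sum_{j,k} P_o[X_j = X_k] \leq c_d\, n^{3/2}$ closes the argument. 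Both proofs hinge on exactly the same analytic input, the uniform on-diagonal bound $\sup_x P_x[X_m = x] \leq c_d/\sqrt{m+1}$; your wrapper avoids stopping times entirely, while the paper's yields the stronger tail estimate $P_o[|R_n| \leq k] \leq c_d k /\sqrt{n}$ rather than only a bound in expectation. Incidentally, your detour through ``feeding this back gives only $n^{1/4}$'' is unnecessary: from the pathwise bound $|R_n| \geq (n+1)^2 / \sum_x L_n(x)^2$, Jensen's inequality $E_o[1/Y] \geq 1/E_o[Y]$ already yields $E_o[|R_n|] \geq (n+1)^2 / E_o[\sum_x L_n(x)^2] \geq c_d^{-1}\sqrt{n}$ directly, with no second application of Cauchy--Schwarz needed.

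One point does need correcting: your parenthetical justification of the heat-kernel bound. It does not follow from the Carne--Varopoulos estimate together with reversibility and spectral theory, and it does not hold on ``every bounded-degree graph'': on a finite graph $P_x[X_{2m} = x]$ converges to $\pi(x)/\pi(V) > 0$, yet Carne--Varopoulos (which is vacuous on the diagonal, giving only $P_x[X_m = x] \leq 2$) and the spectral representation are equally valid there. Infiniteness and connectedness are essential hypotheses; the standard proof goes through the linear volume growth $|B(x,r)| \geq r+1$ of an infinite connected graph and a Nash/isoperimetric argument. This is precisely the result the paper invokes (\cite{W00}, Corollary~14.6, p.~149), and citing it is perfectly legitimate -- but the derivation you sketch for it would fail, so this step should rest on the citation rather than on your argument.
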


\begin{proof}
This proof is an adaptation of an argument appearing in \cite{NP08} in the context of random walk on Cayley graphs. Let $(N_k)_{k \geq 1}$ be the successive times when the random walk visits a new vertex, that is, $N_1=0$, and for $k \geq 1$, $N_k = \inf\{n > N_{k-1}: X_n \notin \{X_0, \ldots X_{N_{k-1}}\} \}$. Then we have for $k \geq 1$ and $o \in V$, by the Chebychev inequality,
\begin{align*}
P_o[|R_n| \leq k] &= P_o \Bigl[ \bigl| \bigl\{ 0 \leq l \leq n: X_l \in \{X_{N_1}, \ldots, X_{N_k} \} \bigr\} \bigr| = n+1 \Bigr] \\
& \leq \frac{1}{n+1} \sum_{1 \leq i \leq k} E_o \biggl[ \sum_{0 \leq l \leq n} \mathbf{1} \{X_l = X_{N_i}\} \biggr].
\end{align*}
Noting that $X_l \neq X_{N_i}$ for $l < N_i$ and applying the strong Markov property at time $N_i$, we deduce that
\begin{align*}
P_o[|R_n| \leq k] &\leq \frac{k}{n} \sum_{0 \leq l \leq n} \sup_{x \in V} P_x[X_l =x].
\end{align*}
By a general heat-kernel upper bound, we have $\sup_{x,y \in V}P_x[X_l=y] \leq c_d/\sqrt{l+1}$, for some constant $c_d>0$ depending on the uniform bound $d$ on degrees (see, for example, \cite{W00}, Corollary~14.6, p.~149). Hence, we find
\begin{align*}
P_o[|R_n| \leq k] &\leq c_d \frac{k}{\sqrt{n}}, \text{ for } k \geq 1.
\end{align*}
This last inequality, applied with $k = [\sqrt{n}/(2c_d)]$, yields 
\begin{equation*}
E_o[|R_n|] \geq \frac{\sqrt{n}}{2c_d} P_o[|R_n| > [\sqrt{n}/(2c_d)]] \geq \frac{\sqrt{n}}{4c_d}. \qedhere
\end{equation*} 
\end{proof}

Finally, we prove that on any infinite connected graph with bounded degrees, $\ent_o(R_n)$ does not grow faster than $\size$.

\begin{proposition}
\label{p:general}
Let $G=(V,E)$ be any infinite connected graph with degrees bounded by $d$. Then
\begin{align*}
\limsup_{n \to \infty} \sup_{o \in V} \frac{\ent_o(R_n)}{\size} \leq  2 \log d.
\end{align*}
\end{proposition}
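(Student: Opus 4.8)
The plan is to bound $\ent_o(R_n)$ by an encoding argument: since the entropy measures the number of nats required to describe $R_n$, it suffices to produce a description of $R_n$ whose length is controlled by $|R_n|$ and the degree bound $d$. Concretely, I would combine the conditioning inequalities \eqref{e:entcomb}, \eqref{e:condest} and Lemma~\ref{l:cond} with the covering path supplied by Lemma~\ref{l:path}.

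First I would strip off the fluctuation of the size of the range. Applying \eqref{e:condest} with $X=R_n$ and $Y=|R_n|$ gives
\begin{align*}
\ent_o(R_n) \leq \ent_o(R_n \mid |R_n|) + \ent_o(|R_n|).
\end{align*}
Since $|R_n|$ takes at most $n+1$ values, \eqref{e:entcomb} bounds the last term by $\log(n+1)$, a contribution that will ultimately be negligible compared with $\size$.

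The core of the proof is the conditional term. By Lemma~\ref{l:cond}, $\ent_o(R_n \mid |R_n|) \leq E_o[\log|\mathcal{X}_{|R_n|}|]$, where $\mathcal{X}_k \subseteq \mathcal{C}_o$ is the collection of possible realizations of $R_n$ having exactly $k$ vertices. To count $\mathcal{X}_k$ I would invoke Lemma~\ref{l:path}: any $A \in \mathcal{C}_o$ with $|A|=k$ is the vertex set of a nearest-neighbor path in $G(A)$ that starts at $o$ and has length at most $2k$. Thus the map sending such a path to its set of visited vertices is onto $\mathcal{X}_k$, so $|\mathcal{X}_k|$ is at most the number of nearest-neighbor paths issued from $o$ of length $\leq 2k$. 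Since each step offers at most $d$ choices and an infinite connected graph forces $d \geq 2$, this number is at most $\sum_{\ell=0}^{2k} d^\ell \leq d^{2k+1}$. Taking logarithms and then $E_o$ yields
\begin{align*}
\ent_o(R_n \mid |R_n|) \leq E_o[(2|R_n|+1)\log d] = (2\size + 1)\log d.
\end{align*}

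Combining the two estimates gives $\ent_o(R_n) \leq 2\size\log d + \log d + \log(n+1)$. Dividing by $\size$ and invoking Proposition~\ref{p:rlb}, which furnishes $c>0$ with $\size \geq c\sqrt{n}$ for all $o \in V$ and all large $n$, the error terms are dominated by $(\log d + \log(n+1))/(c\sqrt{n})$, which tends to $0$ uniformly in $o$. This produces $\limsup_{n\to\infty}\sup_{o\in V}\ent_o(R_n)/\size \leq 2\log d$. The step demanding the most care is the cardinality bound on $\mathcal{X}_k$: I must make sure Lemma~\ref{l:path} genuinely lets me index all admissible ranges of a fixed size by walks with branching factor at most $d$, so that $\log|\mathcal{X}_k|$ grows linearly in $k$ with exactly the constant $2\log d$; once that is in hand, Proposition~\ref{p:rlb} mechanically absorbs the sublinear remainders.
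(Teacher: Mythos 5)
Your proposal is correct and follows essentially the same route as the paper's proof: condition on $|R_n|$ via \eqref{e:condest}, bound $\ent_o(|R_n|)$ by $\log(n+1)$ using \eqref{e:entcomb}, bound the conditional entropy by combining Lemma~\ref{l:cond} with the path-counting argument from Lemma~\ref{l:path}, and absorb the sublinear remainder via Proposition~\ref{p:rlb}. The only difference is that you spell out the counting of ranges of fixed size by nearest-neighbor paths with branching factor $d$, which the paper leaves implicit.
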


\begin{proof}
Lemma~\ref{l:cond} and Lemma~\ref{l:path} together imply that
\begin{align*}
\ent_o(R_n||R_n|) \leq \size 2 \log d,
\end{align*}
whereas it follows from \eqref{e:entcomb} that
\begin{align*}
\ent_o(|R_n|) \leq \log (n+1).
\end{align*}
Hence by \eqref{e:condest}, $\ent_o(R_n) \leq \size 2 \log d + \log (n+1)$. Proposition~\ref{p:rlb} completes the proof.
\end{proof}

\section{The transient case} \label{s:trans}

In this section, we prove Theorem~\ref{t:entlb} asserting that the entropy of $R_n$ grows at least linearly in its expected size $\size$ on any uniformly transient graph. 

\medskip

Lemma~\ref{l:Rbd} and Corollary~\ref{c:bdry} show that $\ent_o(R_n)$ grows at least linearly in the expected size $\sizebd$ of $\partial_i R_n$. These two statements and their proofs are straightforward adaptations of Lemma~3 and Corollary~4 in \cite{BKYY09}.

\begin{lemma} \label{l:Rbd}
For any graph $G$ with degrees bounded by $d \geq 2$,
\begin{align}
\label{e:rbd} P_o[R_n=A] \leq (1- d^{-1})^{|\partial_i A|-1}, \text{ for all } o \in A \subseteq V.
\end{align}
\end{lemma}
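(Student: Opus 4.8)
The plan is to exploit the fact that the event $\{R_n = A\}$ is extremely restrictive: it forces the walk to visit \emph{every} vertex of $A$---in particular every vertex of $\partial_i A$---while never once stepping into $V \setminus A$ during the first $n$ steps. Since each $x \in \partial_i A$ has, by definition, at least one neighbor outside $A$, from $x$ the walk escapes $A$ in a single step with probability at least $1/\deg(x) \geq 1/d$. The strategy is therefore to extract, for (almost) every boundary vertex, one factor of at most $(1-d^{-1})$ accounting for the requirement that the walk did \emph{not} escape at the instant it first stood on that vertex.

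To make this precise, I would introduce stopping times recording the successive first visits to \emph{distinct} boundary vertices. Set $D_0 = \tau_{\partial_i A}$ and, having defined $D_0 < \cdots < D_{k-1}$ with $\{X_{D_0}, \ldots, X_{D_{k-1}}\}$ a set of $k$ distinct vertices of $\partial_i A$, let $D_k$ be the first time after $D_{k-1}$ at which the walk hits a boundary vertex not among these. Writing $m = |\partial_i A|$, these are stopping times $D_0 < D_1 < \cdots < D_{m-1}$ for the natural filtration $(\mathcal{F}_k)_{k \geq 0}$ of the walk, and on $\{R_n = A\}$ all of them are finite and at most $n$. For each $k \leq m-2$ one has $D_k + 1 \leq D_{m-1} \leq n$, so the step out of the boundary vertex $X_{D_k}$ is taken within the horizon $[0,n]$; since $R_n = A$ entails $X_{D_k+1} \in A$, we obtain $\{R_n = A\} \subseteq \bigcap_{k=0}^{m-2}\{D_k < \infty,\ X_{D_k+1} \in A\}$. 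I would then peel these $m-1$ events off one at a time: conditioning on $\mathcal{F}_{D_{m-2}}$ and applying the strong Markov property gives, on $\{D_{m-2}<\infty\}$, $E_o[\mathbf{1}\{X_{D_{m-2}+1}\in A\} \mid \mathcal{F}_{D_{m-2}}] = P_{X_{D_{m-2}}}[X_1 \in A] \leq 1 - d^{-1}$, because $X_{D_{m-2}} \in \partial_i A$. As the remaining escape events $\{X_{D_k+1}\in A\}$, $k \leq m-3$, are $\mathcal{F}_{D_{m-2}}$-measurable (each satisfies $D_k + 1 \leq D_{m-2}$), the factor $(1-d^{-1})$ separates out, and iterating the conditioning $m-1$ times yields $P_o[R_n = A] \leq (1-d^{-1})^{m-1}$.

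The delicate points---and the place where the exponent $|\partial_i A|-1$ rather than $|\partial_i A|$ arises---are largely bookkeeping. One must verify that the $D_k$ are genuine stopping times and that each earlier escape event is measurable with respect to the $\sigma$-algebra at the current conditioning time, so that the strong Markov factor splits off cleanly. The loss of one factor is intrinsic: the escape step at the \emph{last} boundary vertex to be visited, namely at $X_{D_{m-1}}$, may be taken only at time $n$ (or never within the horizon), leaving no controllable step in $[0,n]$; hence only the $m-1$ visits $D_0, \ldots, D_{m-2}$ can be exploited. Finally, the degenerate cases $|\partial_i A| \in \{0,1\}$ require no argument at all, since the asserted bound is then at least $1$.
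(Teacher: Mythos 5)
Your proof is correct and follows essentially the same route as the paper: both arguments identify stopping times at which the walk stands on boundary vertices, note that on $\{R_n=A\}$ the step taken at each such time (except possibly the last, whence the exponent $|\partial_i A|-1$) must stay inside $A$ with probability at most $1-d^{-1}$, and peel off these factors by iterated application of the strong Markov property. The only difference is bookkeeping: the paper uses the successive entrance times to $\partial_i A$ (which may repeat vertices) together with the observation that visiting $|\partial_i A|$ distinct vertices forces at least $|\partial_i A|$ such entrances by time $n$, whereas you track first visits to distinct boundary vertices; both variants yield the identical bound.
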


\begin{proof}
Fix $A \subseteq V$ and define the successive entrance times to $\partial_i A$ by $T_1 = \tau_{\partial_i A}$ and for $k \geq 2$, $T_k = \inf \{n > T_{k-1}: X_n \in \partial_i A\}$. Then $\{R_n = A\} \subseteq \{T_{|\partial_i A|} \leq n\} \subseteq \{T_{|\partial_i A| -1} <n\}$. Note that on the event $\{ T_k< \infty\}$, we have $P_{X_{T_k}}[X_{T_k+1} \in A] \leq 1- (1/d)$. An inductive application of the strong Markov property at the times $T_{|\partial_i A|-1}, T_{|\partial_i A|-2}, \ldots, T_1$ therefore yields
\begin{equation*}
\begin{split}
P_o[R_n=A] &\leq P_o \biggl[ \bigcap_{1 \leq k \leq |\partial_i A|-1} \{T_k < n, X_{T_k +1} \in  A\} \biggr]\\
%&\leq P_o \biggl[ \bigcap_{1 \leq k \leq |\partial_i A|-2} \{T_k < n, X_{T_k %+1} \in  A\} \biggr] \Bigl( 1- \frac{1}{d} \Bigr)\\
&\leq \Bigl( 1- \frac{1}{d} \Bigr)^{|\partial_i A|-1}. \qedhere
\end{split}
\end{equation*}
\end{proof}

\begin{corollary} \label{c:bdry}
For any graph $G$ with degrees bounded by $d \geq 2$,
\begin{align}
\label{e:bdry} \ent_o(R_n)  \geq (\sizebd -1 ) \log \bigl((1-d^{-1})^{-1} \bigr),
\end{align}
where $\sizebd = E_o[|\partial_i R_n|].$
\end{corollary}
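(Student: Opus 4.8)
The plan is to read off the bound directly from Lemma~\ref{l:Rbd} by applying it to the random set $A = R_n$ itself. Recall that by definition $\ent_o(R_n) = E_o[\log(1/p_o(R_n))]$, where $p_o(A) = P_o[R_n = A]$. Since $R_n \in {\mathcal C}_o$ holds $P_o$-a.s., I would first observe that Lemma~\ref{l:Rbd}, applied realization by realization with $A = R_n$, yields the pointwise (almost sure) estimate
\begin{align*}
p_o(R_n) \leq (1 - d^{-1})^{|\partial_i R_n| - 1}.
\end{align*}
This is just the inequality \eqref{e:rbd} evaluated at the event actually realized by the walk.

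Next I would invert and take logarithms. Because $\log$ is increasing and $0 < 1 - d^{-1} < 1$, the displayed bound gives, $P_o$-a.s.,
\begin{align*}
\log \bigl( 1/p_o(R_n) \bigr) \geq \bigl( |\partial_i R_n| - 1 \bigr) \log \bigl( (1 - d^{-1})^{-1} \bigr).
\end{align*}
Taking $E_o[\,\cdot\,]$ of both sides and using the linearity of expectation together with the definition $\sizebd = E_o[|\partial_i R_n|]$ then produces
\begin{align*}
\ent_o(R_n) \geq (\sizebd - 1) \log \bigl( (1 - d^{-1})^{-1} \bigr),
\end{align*}
which is exactly \eqref{e:bdry}.

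There is no genuine obstacle here: the corollary is an immediate consequence of Lemma~\ref{l:Rbd} once one passes from the deterministic bound on $P_o[R_n = A]$ to the random quantity $p_o(R_n)$. The only points demanding a little care are that the estimate of Lemma~\ref{l:Rbd} must be invoked as an almost-sure inequality for the random argument $R_n$ rather than for a fixed $A$, and that the expectation on the right-hand side correctly collapses to $\sizebd - 1$; both are routine.
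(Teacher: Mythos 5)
Your proof is correct and is essentially the paper's own argument: the paper writes the same computation as a sum over $A \in {\mathcal C}_o$ weighted by $P_o[R_n=A]$, which is precisely your step of applying Lemma~\ref{l:Rbd} realization by realization and then taking $E_o$. No difference in substance.
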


\begin{proof}
By Lemma~\ref{l:Rbd}, 
\begin{equation*}
\ent_o(R_n) \geq \log \bigl((1-d^{-1})^{-1} \bigr) \sum_{A \in {\mathcal C}_o} P[R_n=A] \bigl( |\partial_i A| -1 \bigr). \qedhere
\end{equation*}
\end{proof}

By the last corollary, it suffices to control $\sizebd$ in order to prove a lower bound on $\ent_o(R_n)$. We will eventually prove that many vertices of degree at least $3$ typically belong to $\partial_i R_n$. To this end, we prove in the following lemma that such vertices exist within constant distance of any vertex in uniformly transient graphs. For notational convenience, we introduce the parameter
\begin{align}
\label{e:utpar}
\ut = \inf_{x \in V} P_x [\tau_x^+ = \infty],
\end{align}
which is strictly positive if $G$ is uniformly transient (cf.~\eqref{e:ut}).

\begin{lemma}
\label{l:d3}
Let $G$ be a uniformly transient graph with $\ut$ as in \eqref{e:utpar}. Then for any $x \in V$, there is a vertex $y$ with $\deg(y) \geq 3$ and $d(x,y) \leq 1/\alpha$.
\end{lemma}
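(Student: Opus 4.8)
The plan is to show that the escape probability from any vertex $x$ is at most $1/d$, where $d = d(x, S)$ is the distance from $x$ to the set $S = \{z \in V : \deg(z) \geq 3\}$ of branch vertices; since $\alpha \leq P_x[\tau_x^+ = \infty]$ by the definition of $\alpha$ in \eqref{e:utpar}, this immediately forces $d \leq 1/\alpha$ and hence produces the desired vertex $y \in S$ with $d(x,y) = d \leq 1/\alpha$. First I observe that if $\deg(x) \geq 3$ we may take $y = x$, so I assume $\deg(x) \leq 2$ and let $d \in \{1, 2, \ldots\} \cup \{\infty\}$ be the distance from $x$ to $S$. Every vertex of $B(x, d-1)$ then has degree at most $2$, so the graph near $x$ is a union of paths, and the idea is to reduce the escape event to a one-dimensional gambler's ruin problem along these paths.

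The key step is the bound $P_x[\tau_x^+ = \infty] \leq 1/d$. Conditioning on the first step gives $P_x[\tau_x^+ = \infty] = \deg(x)^{-1} \sum_{w \sim x} P_w[\tau_x = \infty]$, so it suffices to show $P_w[\tau_x = \infty] \leq 1/d$ for each neighbour $w$ of $x$. Starting from such a $w$, I follow the path $x = v_0, v_1 = w, v_2, \ldots$ obtained by successively moving to the other neighbour of the current degree-$2$ vertex; this path consists of genuine degree-$2$ vertices until it first meets a vertex of degree different from $2$. While the walk remains on the interior of this path it performs a symmetric nearest-neighbour walk on a segment, since every interior vertex has exactly two neighbours each receiving transition probability $1/2$. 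If the path terminates at a degree-$1$ vertex, the walk from $w$ hits $x$ with probability $1$ and the corresponding term vanishes. Otherwise the path first reaches a vertex of $S$, necessarily at distance $m \geq d$ from $x$; by the classical gambler's ruin identity for symmetric walk on $\{0, 1, \ldots, m\}$ started at $1$, the probability of reaching $m$ before $0 = x$ equals $1/m \leq 1/d$. Since avoiding $x$ forever requires at least reaching this branch vertex before $x$, I get $P_w[\tau_x = \infty] \leq 1/d$ in every case.

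Summing over the at most $\deg(x)$ neighbours of $x$ then yields $P_x[\tau_x^+ = \infty] \leq 1/d$, so $\alpha \leq 1/d$ and $d \leq 1/\alpha$; in particular $d < \infty$ and the vertex $y$ exists. I expect the main care to lie in the reduction to the one-dimensional walk: verifying that the arms emanating from $x$ really are paths of degree-$2$ vertices up to the first branch point, checking that the constructed path cannot loop back to $x$ before meeting $S$ (which would create a cycle of degree-$2$ vertices, hence a finite or recurrent piece incompatible with uniform transience), and confirming that the dead-end case contributes nothing. Once these structural points are settled, the gambler's ruin computation itself is routine.
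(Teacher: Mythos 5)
You take essentially the same route as the paper: bound $\alpha \leq P_x[\tau_x^+ = \infty]$ via \eqref{e:utpar}, decompose over the first step of the walk, and reduce each neighbour's contribution to a gambler's-ruin probability of at most $1/d(x,S)$ along the arms of degree-$2$ vertices, where $S=\{z \in V : \deg(z)\geq 3\}$. The paper performs the identical computation, only phrased as the probability of reaching $\partial_i B(x,r)$ before returning to $x$, where $r$ is the distance from $x$ to the nearest vertex of degree at least $3$.

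There is one case your analysis does not cover. After the dead-end case you write that ``otherwise the path first reaches a vertex of $S$,'' but this dichotomy is not exhaustive: an arm may be an infinite ray of degree-$2$ vertices that never meets $S$ and never dead-ends. This can happen even when $d(x,S)<\infty$, since only one of the arms emanating from $x$ needs to lead to a branch vertex. The bound survives in this case, but it needs its own (one-line) argument: such a ray can only be exited through $x$, and symmetric simple random walk on the half-line is recurrent, so $P_w[\tau_x=\infty]=0$ for that arm. It is worth noting that the paper's formulation sidesteps this issue automatically: by truncating every arm at the sphere $\partial_i B(x,r)$ of a \emph{finite} ball, each arm either fails to reach $\partial_i B(x,r)$ (contributing $0$) or reaches it (contributing $1/r$ by gambler's ruin), and no third case can occur. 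Your treatment of the remaining structural points --- simplicity of the arms, exclusion of the loop-back case by transience, and the dead-end case --- is correct, so once the infinite-ray case is added your proof is complete.
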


\begin{proof}
Let $x \in V$, and let $r$ be the distance from $x$ to the vertex of degree at least $3$ closest to $x$. Then
\begin{align}
\label{d31}
\alpha \leq P_x[\tau^+_x = \infty] &\leq P_x[\tau_{\partial_i B(x,r)} < \tau^+_x]\\
&= \sum_{y: y \sim x} \frac{1}{\deg(x)} P_y[\tau_{\partial_i B(x,r)} < \tau_x]. \nonumber
\end{align}
Consider any neighbor $y$ of $x$ and let $I_y$ be the set of vertices in $B(x,r) \setminus \partial_i B(x,r)$ that are connected to $y$ in $(B(x,r) \setminus \partial_i B(x,r)) \setminus \{x\}$. Then $I_y$ is connected and consists of vertices of degree at most $2$. If no vertex in $I_y$ is a neighbor of a vertex in $\partial_i B(x,r)$, then the escape probability on the right-hand side of \eqref{d31} equals $0$. Otherwise, the escape probability is equal to the probability that a simple random walk on $\mathbb Z$ started at $1$ reaches $r$ before $0$, hence to $1/r$ (see, for example, \cite{D05}, Chapter 3, Example~1.5, p.~179). It follows that $\alpha \leq 1/r$. 
\end{proof}

We will also require the following simple consequence of monotonicity of the capacity (cf.~\eqref{e:cap}):

\begin{lemma} \label{l:esc}
Let $G$ be a uniformly transient graph. Then 
\begin{align}
\label{e:esc} \inf_{A \subseteq V, A \neq \emptyset} \capacity(A) \geq \ut.
\end{align}
\end{lemma}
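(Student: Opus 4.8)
The plan is to reduce the desired bound for a general nonempty set $A$ to the single case of a one-point set, using that the capacity is monotone under set inclusion, and then to evaluate the capacity of a singleton directly in terms of the escape probability $\ut$ from \eqref{e:utpar}. Concretely, it suffices to establish that $\capacity(A) \geq \capacity(\{x\})$ for any $x \in A$, together with the identity $\capacity(\{x\}) = \deg(x)\,P_x[\tau^+_x = \infty] \geq \ut$.

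The nontrivial ingredient, and the step I expect to be the main obstacle, is the monotonicity statement: if $A \subseteq B$ are finite nonempty subsets of $V$, then $\capacity(A) \leq \capacity(B)$. Since the capacity in \eqref{e:cap} is the total mass of the equilibrium measure $e_A(x) = \deg(x)\,P_x[\tau^+_A = \infty]$, this inclusion-monotonicity is standard, and I would either cite it from \cite{W00} or read it off the Dirichlet-principle characterization $\capacity(A) = \min\{\mathcal{E}(f): f|_A = 1\}$, where $\mathcal{E}$ is the Dirichlet energy of the (reversible, transient) walk: passing from $A$ to the larger set $B$ shrinks the admissible class, since every $f$ with $f|_B = 1$ in particular satisfies $f|_A = 1$, so the minimal energy can only increase. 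It is worth stressing why this global statement is genuinely needed: a naive term-by-term comparison fails, because for $x \in A$ we have $\tau^+_A \leq \tau^+_x$ and hence $P_x[\tau^+_A = \infty] \leq P_x[\tau^+_x = \infty]$, i.e.\ the single contribution of $x$ \emph{decreases} when we enlarge $\{x\}$ to $A$; it is precisely the contributions of the other vertices of $A$ that compensate, and only monotonicity of the full sum captures this.

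Granting monotonicity, I would fix an arbitrary nonempty $A \subseteq V$; it suffices to treat finite $A$, to which \eqref{e:cap} applies, and an arbitrary such set contains at least one vertex $x$. Applying monotonicity to $\{x\} \subseteq A$ gives $\capacity(A) \geq \capacity(\{x\})$. Finally I would evaluate the singleton: since $\tau^+_{\{x\}} = \tau^+_x$, the definition \eqref{e:cap} yields $\capacity(\{x\}) = \deg(x)\,P_x[\tau^+_x = \infty]$, and by \eqref{e:utpar} together with $\deg(x) \geq 1$ (each vertex has a neighbor, as $\ut > 0$ forces positive escape probability) we obtain $\capacity(\{x\}) \geq \ut$. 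Combining, $\capacity(A) \geq \ut$ for every nonempty $A$, and taking the infimum over all such $A$ proves \eqref{e:esc}.
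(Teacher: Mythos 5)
Your proposal is correct, and at the top level it is the same reduction as the paper's: evaluate the singleton, $\capacity(\{x\}) = \deg(x)\,P_x[\tau^+_x=\infty] \geq \ut$, and then apply monotonicity of capacity under inclusion to $\{x\} \subseteq A$. Where you genuinely diverge is in the justification of monotonicity, which is in fact the bulk of the paper's proof: the paper gives a self-contained probabilistic argument via a last-exit decomposition, summing over the time $n$ and the location $y \in \partial_i B$ of the last visit to $B$, applying the Markov property and then reversibility in the form $\deg(x) P_x[\tau_A^+ > n, X_n=y] = \deg(y) P_y[\tau_A = n, X_n=x]$, to arrive at $\capacity(A) = \sum_{y \in \partial_i B} \deg(y) P_y[\tau_A<\infty] P_y[\tau_B^+ = \infty] \leq \capacity(B)$. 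You instead invoke the Dirichlet-principle characterization $\capacity(A) = \min\{\mathcal{E}(f): f|_A = 1\}$ (or a citation to \cite{W00}), under which monotonicity is immediate because enlarging the set shrinks the admissible class. Both routes are valid; the variational one is shorter and conceptually transparent, but it imports nontrivial machinery whose correct formulation on an infinite graph needs some care (the minimum must run over the right class, e.g.\ the closure of finitely supported functions in the energy norm, and the identification of that minimum with the probabilistic definition \eqref{e:cap} uses transience and the equilibrium potential), whereas the paper's last-exit computation is elementary and uses nothing beyond reversibility. Your side remark that a term-by-term comparison must fail --- since for $x \in A$ one has $P_x[\tau^+_A=\infty] \leq P_x[\tau^+_x = \infty]$, so each individual contribution decreases upon enlarging the set --- correctly pinpoints why the full-sum monotonicity is the essential ingredient, and it matches the structure of the paper's argument, where the compensating contributions of the added vertices appear explicitly through the sum over $y \in \partial_i B$.
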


\begin{proof}
By uniform transience, $\capacity(\{x\}) \geq \ut \deg(x) \geq \ut$ for all $x \in V$ (see \eqref{e:ut}). The statement \eqref{e:esc} thus follows immediately from monotonicity of $\capacity$:
\begin{align}
\label{e:capmon}
\text{for finite, non-empty sets } A \subseteq B \subseteq V, \, \capacity(A) \leq \capacity(B).
\end{align}
Here is a direct proof of this well-known property: by summing over all possible times $n$ and locations $y$ of the last visit of the random walk to the set $B$ and using the simple Markov property at time $n$, we find that, for $A, B$ as above,
\begin{align}
\label{esc1}
\capacity(A) &= \sum_{x \in \partial_i A} \sum_{n=0}^\infty \sum_{y \in \partial_i B} P_x [\tau_A^+ > n, X_n = y, \tau_B^+ \circ \theta_n = \infty ] \deg(x)\\
&= \sum_{x \in \partial_i A} \sum_{n=0}^\infty \sum_{y \in \partial_i B} P_x [\tau_A^+ > n, X_n = y] P_y[\tau_B^+ = \infty ] \deg(x). \nonumber
\end{align}
Reversibility of the simple random walk implies that
\begin{align*}
\deg(x) P_x[\tau_A^+ > n, X_n=y] = \deg(y) P_y [\tau_A = n, X_n=x],
\end{align*}
hence by \eqref{esc1} that
\begin{align*}
\capacity(A) &= \sum_{y \in \partial_i B} \deg(y) P_y[\tau_A<\infty] P_y[\tau_B^+ = \infty] \leq \capacity(B),
\end{align*}
proving \eqref{e:capmon}.
\end{proof}

We now come to the main objective of this section.

\begin{proof}[Proof of Theorem~\ref{t:entlb}.]
We will prove that 
\begin{align}
\label{entlb1}
\liminf_{n \to \infty} \inf_{o \in V} \frac{\sizebd}{\size} >0.
\end{align}
The desired statement then follows by Corollary~\ref{c:bdry} and the fact that $$\inf_{o \in V} \size \to \infty,$$ proved in Proposition~\ref{p:rlb} (indeed, all connected components of a transient graph are infinite). Denote the set of vertices with degree at least $3$ by $V_{\geq 3}$ and the uniform bound on the degrees by $d$. Observe that any vertex $x$ in $V_{\geq 3}$ belongs to $\partial_i R_n$ if it is visited by the random walk, but at least one of its neighbors is not. Hence, for any $x \in V_{\geq 3}$, the probability of $\{x \in \partial_i R_n\}$ is bounded from below by the probability that the random walk first reaches $B(x,1)$ at some vertex $y \sim x$, then follows the path $(y,x,z)$, where $z$ is the neighbor of $x$ maximizing $P_z [\tau_{B(x,1)}^+   = \infty]$, and does not return to $B(x,1)$ until time $n$. By the strong Markov property applied at time $\tau_{B(x,1)} + 2$ and at time $\tau_{B(x,1)}$, we hence obtain that for any $o \in V$,
\begin{align}
\label{entlb2}
&P_o[x \in \partial_i R_n] \geq \\
&P_o[\tau_{B(x,1)} \leq n-2, (X_1, X_2) \circ \theta_{\tau_{B(x,1)}} = (x,z), \tau^+_{B(x,1)} \circ \theta_{\tau_{B(x,1)+2}}  = \infty] \nonumber\\
&\geq P_o[\tau_{B(x,1)} \leq n-2] \frac{1}{d^2} \frac{\ut}{d^2}, \text{ for any $x \in V_{\geq 3}$,} \nonumber
\end{align}
where we have applied Lemma~\ref{l:esc} with $A=B(x,1)$ in order to bound the escape probability $P_z[\tau^+_{B(x,1)} = \infty]$ from below by $\alpha/d^2$. For any $r \geq 1$, the random walk reaches $B(x,1)$ if it enters $B(x,r)$ and then follows the shortest path to $B(x,1)$, hence,
\begin{align*}
P_o[\tau_{B(x,1)} \leq n-2] \geq P_o[\tau_{B(x,r)} \leq n-1-r] d^{-(r-1)}.
\end{align*}
Using this estimate in \eqref{entlb2}, we obtain
\begin{align}
\label{entlb3}
\sizebd &\geq \sum_{x \in V_{\geq 3}} P_o[x \in \partial_i R_n] \\
&\geq \sum_{x \in V_{\geq 3}} P_o[\tau_{B(x,r)} \leq n-1-r] \frac{\alpha}{d^{3+r}} \nonumber \end{align}
We now fix $r = [1/\alpha]+1$ and note that, by Lemma~\ref{l:d3}, $\cup_{x \in V_{\geq 3}} B(x,r) = V$. In particular, the random walk is in one of the balls $B(x,r)$, $x \in V_{\geq 3}$, at every step, and therefore visits at least $[|R_k|/\sup_x |B(x,r)|]$ of them in its first $k \geq 0$ steps. Hence, we can deduce from \eqref{entlb3} that
\begin{align*}
\sizebd &\geq \left( \frac{E_o[|R_{n-1-r}|]}{\sup_{x \in V}|B(x,r)|} - 1 \right) \frac{\ut}{d^{3+r}} \\
&\geq \left(\frac{\size-1-r}{d^{1+r}} - 1 \right) \frac{\ut}{d^{3+r}}, \text{ for any } o \in V.
\end{align*}
Using again that $\inf_{o \in V} \size \to \infty$ by Proposition~\ref{p:rlb}, we obtain \eqref{entlb1}.
\end{proof}

\begin{figure}
\psfrag{S1}[cc][cc][4][0]{$S_1$}
\psfrag{S2}[cc][cc][4][0]{$S_2$}
\psfrag{r}[cc][cc][4][0]{$o$}
\begin{center}
\includegraphics[angle=0,
width=0.6\textwidth]{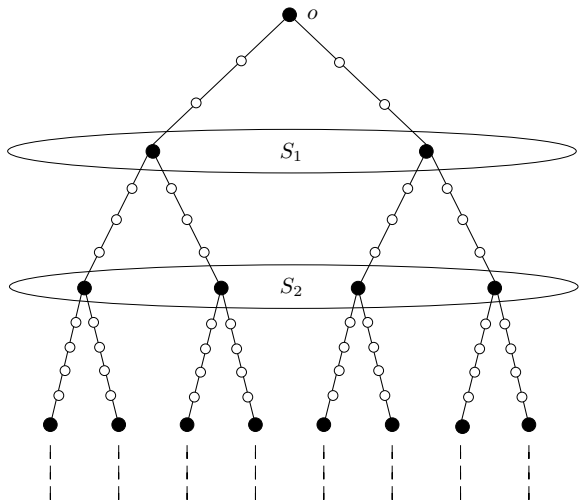}
\end{center}
\caption{The stretched binary tree constructed in Remark~\ref{r:extrans}. The filled vertices are the ones present in the original binary tree $T_b$, the stretched tree $G$ is obtained by adding the unfilled ones.} \label{fig:s}
\end{figure}
\begin{remark}
\label{r:extrans}
\textup{
The conclusion \eqref{e:entlb} of Theorem~\ref{t:entlb} does not hold for every transient graph with bounded degrees. For a counterexample, consider a binary tree $T_b=(V_b,E_b)$ and denote by $S_l = \{x \in V_b: d_{T_b}(o,x)=l\}$ the set of vertices at distance $l$ from the root. We now split every edge connecting a vertex in $S_{l-1}$ to a vertex in $S_l$ into a path of length $l+2$, for $l \geq 1$, and thereby obtain the stretched tree $G=(V,E)$, illustrated in Figure~\ref{fig:s}. Let now $(\sigma_n)_{n \geq 1}$ be the successive displacements of the random walk in $V_b$, that is, $\sigma_1 = \tau^+_{V_b}$ and for $i \geq 2$, $\sigma_i = \inf\{ n > \sigma_{i-1}: X_n \in \tau_{V_b} \}$. Then elementary computations using the simple Markov property at time $1$ (cf.~\cite{D05}, Chapter~4, Example~7.1, p.~271-272) show that for any vertex $x$ in $S_l$, 
\begin{align}
\label{cex0}
P_x[d_{T_b}(X_{\sigma_1},o)>l]= \frac{2}{2+(l+3)/(l+2)} \geq \frac{6}{10} > \frac{1}{2}, \text{ for } l \geq 1.
\end{align}
Hence, the process $(X_{\sigma_k})_{k \geq 1}$ is transient. In particular, the stretched graph $G$ remains transient. Denote by $R_n' = R_n \cap V_b$ the intersection of $R_n$ with the vertices in the original tree. Then from \eqref{e:condest}, we obtain that 
\begin{align}
\label{cex1}
\ent_o(R_n) &\leq \ent_o (R_n | R_n') + \ent_o (R_n'). 
\end{align}
For any fixed realization of $R_n'$ of diameter $m$ with respect to the original tree $T_b$, the set $R_n$ is determined up to the precise location of at most $2 |R_n'|$ boundary vertices on paths of length at most $m+3$. Hence, for any such realization, there are at most $(m+3)^{2|R_n'|} \leq (c |R_n|)^{2|R_n'|}$ choices for $R_n$. It therefore follows from Lemma~\ref{l:cond} that 
\begin{align*}
\ent_o(R_n | R_n') \leq c E_o[|R_n'|] \log n.
\end{align*}
The amount of time the process $X_{\sigma_.}$ spends in any given set $S_k$ is by \eqref{cex0} and an elementary estimate on biased random walk stochastically dominated by a geometrically distributed random variable with success probability $1/5$ (cf.~\cite{D05}, Chapter~4, Example~7.1 (b), p.~271-272). It follows that \begin{align}
\label{cex2}
E_o[|R_n'|] &\leq1+ 5 E_o \biggl[\sum_{1 \leq k \leq n} {\mathbf{1}}_{\{\tau_{S_k} \leq n\}} \biggr] \\
&\leq c E_o \Big[ \max_{0 \leq k \leq n} d(o,X_k)^{1/2} \Big] \leq c E_o [\sqrt{|R_n|}]. \nonumber
\end{align}
Applying Jensen's inequality on the right-hand side, we deduce that
\begin{align*}
\ent_o(R_n | R_n') \leq c \size^{1/2} \log n.
\end{align*}
Regarding the other term on the right-hand side of \eqref{cex1}, we have by the same argument as in the proof of Proposition~\ref{p:general}, 
\begin{align*}
\ent_o (R_n') \leq c E_o [| R_n'|] \leq c \size^{1/2}, \text{ by } \eqref{cex2}.
\end{align*}
Upon inserting the last two estimates into \eqref{cex1}, it follows from Proposition~\ref{p:rlb} that $\sup_{o \in V} \ent_o(R_n) / \size$ tends to $0$ as $n \to \infty$, although $G$ is transient.
}
\end{remark}

%\begin{corollary} \label{c:trans}
%Let $G$ be a vertex-transitive and transient graph. Then there are constants %$c_1, c_2>0$ such that
%\begin{align}
%\label{e:trans} c_1  \leq \frac{\ent_o(R_n) (R_n)}{E[|R_n|]} \leq c_2 .
%\end{align}
%\end{corollary}

\section{The recurrent case} \label{s:rec}

In this section we prove Theorem~\ref{t:entub}, showing that $\sup_{o \in V} \ent_o(R_n)/\size$ tends to zero as $n \to \infty$ on uniformly recurrent graphs satisfying the growth condition \eqref{e:vg}. 

\medskip

Recall that any realization of $R_n$ belongs to ${\mathcal C}_o$, $P$-a.s. (cf.~\eqref{e:Co}). Our argument is based on Lemma~\ref{l:path} showing that any set $A \in {\mathcal C}_o$ can be explored by a path not longer than $2|A|$.  We will use this fact in the proof of Theorem~\ref{t:entub} in order to show that any realization of $R_n$ can be covered by one of at most $\sup_{x \in V} |\partial_e B(x,r)|^{d|R_n|/r}$ different collections of balls of radius $r$. The second key observation will be that due to uniform recurrence, most of these balls of radius $r$ visited are typically completely covered by the random walk. This will show that the distribution of $R_n$ is sufficiently concentrated to admit a bound on $\ent_o(R_n)$ sublinear in $\size$.

%\begin{lemma} \label{l:subtree}
%Let $G =(V,E)$ be any graph, $x \in V$ and $r \geq 1$. Then $G(B(x,r))$ %has a subtree with $|\partial_i B(x,r)|$ leaves, all of which are at distance %$r$ from $x$.
%\end{lemma}
%
%
%\begin{lemma} \label{l:vg}
%Let $G$ be a uniformly recurrent graph. Then
%\begin{align}
%\label{e:vg}
%\frac{\log \sup_{x \in V} |\partial_e B(x,r)|}{r} \longrightarrow 0, \text{ %as } r \to \infty.
%\end{align}
%\end{lemma}
\begin{proof}[Proof of Theorem~\ref{t:entub}.]
We denote the uniform bound on degrees by $d$. Note that we can assume without loss of generality that $G$ is connected, for we may otherwise only consider the component of $G$ containing the starting vertex $o$ of the random walk. If $G$ is finite, then \eqref{e:entub} immediately follows from the fact that $\lim_n P_o[R_n = V]=1$. Hence, we can from now on assume that $G$ is connected and infinite. In particular, Proposition~\ref{p:rlb} is available for application.

Consider any $o \in V$ and $r \geq 1$. For any $A \in {\mathcal C}_o$ (cf.~\eqref{e:Co}), we define the finite sequence of vertices 
\begin{align}
\label{entub00} F_r(A)=(x_1, \ldots, x_k)
\end{align}
such that the balls with radius $r$ centered at $x_1, \ldots, x_k$ cover $A$, trying to keep $k$ as small as possible. We define $F_k(A)$ inductively as follows: by Lemma~\ref{l:path}, there is a nearest-neighbor path $p_A=(p(0), \ldots, p(l))$ in $G(A)$ starting at $o=p_A(0)$ and visiting all vertices in $A$ in $l \leq 2|A|$ steps. Set $x_1 = o$, $t_1=0$, and for $i \geq 2$ such that $t_{i-1} < \infty$, define $t_i$ as 
\begin{align*}
t_i = \left\{ \begin{array}{ll} \inf\{t > t_{i-1}: p_A(t) \notin B(x_{i-1},r)\}, & \text{if } \cup_{1 \leq j \leq i-1} B(x_j,r) \nsupseteq A, \\
\infty, & \text{otherwise,}
\end{array} \right.
\end{align*}
and, provided $t_i < \infty$, define $x_i = p_A(t_i)$. Since $p_A$ is of length at most $2|A|$ and covers all of $A$, this yields a finite sequence as in \eqref{entub00}, where $k$ is the largest index such that $t_k < \infty$ and satisfies
\begin{align}
\label{entub0}
k \leq 1+ [2|A|/r]. 
\end{align}
Finally, the construction implies that 
\begin{align}
\label{entub0.1}
\cup_{i=1}^k B(x_i,r) \supseteq A.
\end{align}
We now partition the collection ${\mathcal C}_o$ according to the cardinality of $F_r(A)$ and the number of vertices $x$ in $F_r(A)$ with the property that $B(x,r)$ is not completely filled by $A$: we have ${\mathcal C}_o = \cup_{k=1}^{\infty} \cup_{l=0}^k {\mathcal C}(k,l)$, where the disjoint collections ${\mathcal C}(k,l)$ are defined as
\begin{align}
\label{entub0.2}
{\mathcal C}(k,l) = \{A \in {{\mathcal C}_o} : |F_r(A)|=k, \, |\{x \in F_r(A): A \nsupseteq B(x,r)\}|=l\}. 
\end{align}
We introduce the random variables $K$ and $L$, defined as
\begin{align}
\label{entub0.2.00}
K = |F_r(R_n)|, \quad L = |\{x \in F_r(R_n): R_n \nsupseteq B(x,r)\}|.
\end{align}
Observe that since $|R_n| \leq n+1$, \eqref{entub0} implies that $$0 \leq L \leq K \leq [2(n+1)/r] + 1, \, P_o\text{-}a.s.$$
Using the elementary estimate \eqref{e:condest} together with \eqref{e:entcomb} and Lemma~\ref{l:cond}, we obtain the following bound on the entropy of $R_n$: 
\begin{align}
\label{entub0.2.0}
\ent_o(R_n) &\leq \ent_o \big( (K,L) \big) + \ent_o \big( R_n | (K,L) \big) \\
&\leq c_d \log n + E_o \big[ \log |{\mathcal C}(K,L)| \big]. \nonumber
\end{align}
In order to bound the cardinality of ${\mathcal C}(k,l)$ for $k \geq 1$, $l \geq 0$, we denote the maximal size of the ball and of the sphere of radius $r \geq 1$ by 
\begin{align}
\label{entub0.2.1}
b_r = \sup_{x \in V} |B(x,r)| \leq d^{1+r} \text{ and } s_r = \sup_{x \in V} |\partial_e B(x,r)|. 
\end{align}
We now bound the number of choices we have when selecting a set $A$ from the collection ${\mathcal C}(k,l)$. For a set $A$ to belong to ${\mathcal C}(k,l)$, $F_r(A)$ must consist of $k$ vertices. The initial vertex $x_1$ must be $o$ and each successive vertex must be at distance $r+1$ from the previous one. Hence, $F_r(A)$ can take at most $s_r^k$ different values. For every choice of $F_r(A) = (x_1, \ldots, x_k)$, there are at most $\binom{k}{l} \leq 2^k$ different choices of size-$l$ subsets of vertices $x_i$ with the property that $B(x_i,r)$ is not completely filled by $A$. After selecting $F_r(A)$ and such a subset, $A$ is by \eqref{entub0.1} determined up to the at most $(2^{b_r})^l$ possible configurations in the balls with centers $x_i$ that are not subsets of $A$. Hence, we have 
\begin{align}
\label{entub0.3}
\log |{\mathcal C}(k,l)| \leq k (\log s_r + \log 2) + b_r l \log 2. 
\end{align}
Inserting this estimate into the above bound \eqref{entub0.2.0} on $\ent_o(R_n)$, we obtain
\begin{align}
\label{entub1} \ent_o(R_n) \leq & E_o [K]  (\log s_r + \log 2) +  E_o [L] b_r \log 2 + c_d  \log n.
\end{align}
By \eqref{entub0}, $F_r(R_n)$ is of cardinality at most $(2|R_n|/r)+1$, hence
\begin{align}
\label{entub1.1}
E_o [K] \leq  \frac{2}{r} \size +1.
\end{align}
We now write $P(r,n) = \sup_{x \in V} P_x [R_{[n^{1/4}]} \nsupseteq B(x,r)]$ and bound the other expectation on the right-hand side of \eqref{entub1}. Since $F_r(R_n) \subseteq R_n$, 
\begin{align}
\label{entub1.2}
E_o [L] &\leq \sum_{x \in V} P_o[\tau_x \leq n, R_n \nsupseteq B(x,r)] \displaybreak[0] \\
&= \sum_{x \in V} \sum_{m=0}^n P_o[\tau_x=m] P_x[R_{n-m} \nsupseteq B(x,r)] \text{ (Markov prop.)} \nonumber\\
&\leq \sum_{x \in V} \Biggl( \sum_{m=0}^{n-[n^{1/4}]} P_o[\tau_x=m] P(r,n) + \sum_{m = n - [n^{1/4}] +1}^n P_o[\tau_x=m] \Biggr) \nonumber\\
&\leq  \size P(r,n)  + n^{1/4} . \nonumber
\end{align}
In order to bound $P(r,n)$, fix any $x \in V$. Denote the successive return times to $x$ by $(T_k)_{k \geq 1}$, that is, 
$T_1 = \tau_{x}^+ \text{, and for $k \geq 2$, }
T_k = \inf\{ n > T_{k-1}: X_n =x\}.$
Since any $y \in B(x,r)$ is within distance $r$ of $x$, we have $P_x[ \tau_y \leq \tau^+_x] \geq d^{-r}$ for any such $y$. With the strong Markov property applied at the times $T_k$, we infer that for any $m \geq 1$,
\begin{align}
\label{entub1.3}
P(r,n) &\leq \sup_{x \in V} \sum_{y \in B(x,r)} P_x[\tau_y > n^{1/4}]\\ 
&\leq \sup_{x \in V} \sum_{y \in B(x,r)}  \Big(P_x [\tau_y > T_m] + P_x[T_m > n^{1/4}] \Big) \nonumber\\
 &\leq \sup_{x \in V} |B(x,r)|  \Big((1-d^{-r})^m + m P_x[\tau_x^+ \geq n^{1/4}/m] \Big). \nonumber
\end{align}
Inserting the above estimates into \eqref{entub1}, we deduce that
\begin{align*}
\frac{\ent_o(R_n)}{\size} \leq  c \frac{2 \log s_r}{r} + cb_r^2 (1-d^{-r})^m + cb_r^2 m \sup_{x \in V} P_x \Bigl[ \tau^+_x > \frac{ n^{1/4}}{m} \Bigr] +  c_d b_r \frac{ n^{1/4}}{\size},
\end{align*}
for any $r,m,n \geq 1$ and $o \in V$. By assumption \eqref{e:unire} and Proposition~\ref{p:rlb}, applied to the last two terms, the supremum over $o \in V$ of right-hand side converges as $n$ tends to infinity to $(c (\log s_r)/r) +c b_r^2(1-c_{d,r})^m$. Letting $m$ tend to infinity, then using \eqref{e:vg}, we obtain \eqref{e:entub}. \end{proof}

\begin{figure}
\psfrag{a1}[cc][cc][1.5][0]{$a_1$}
\psfrag{a2}[cc][cc][1.5][0]{$a_2$}
\psfrag{o1}[cc][cc][1.5][0]{$o_1$}
\psfrag{o2}[cc][cc][1.5][0]{$o_2$}
\psfrag{o3}[cc][cc][1.5][0]{$o_3$}
\begin{center}
\includegraphics[angle=0,
width=0.7\textwidth]{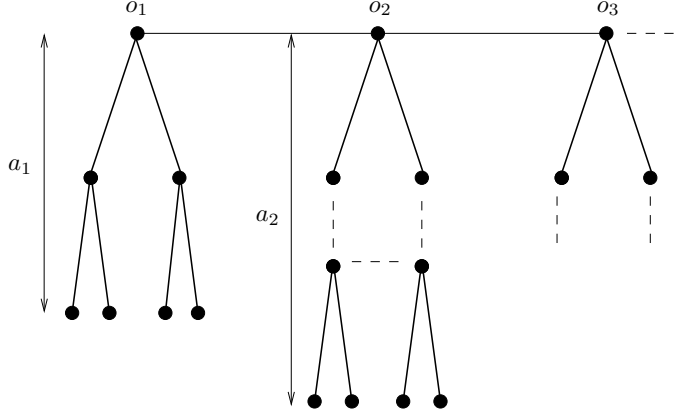}
\end{center}
\caption{The graph constructed in Remark~\ref{r:exrec}.} \label{fig:c}
\end{figure}
\begin{remark}
\label{r:exrec}
\textup{
The conclusion \eqref{e:entub} of Theorem~\ref{t:entub} does not hold for every recurrent graph with bounded degrees, not even without the supremum over $o \in V$. As a counterexample, consider an infinite sequence of binary trees of finite depths $a_1 < a_2 < \cdots $, with roots $o=o_1, o_2, \ldots$. Regardless of the choice of $a_1, a_2, \ldots$, after addition of edges $$\{o_1,o_2\}, \{o_2, o_3\}, \ldots$$ one obtains a connected and recurrent graph, illustrated in Figure~\ref{fig:c}. The sequence of depths can be chosen recursively such that 
\begin{align}
\label{exrec1}
P_{o}[\tau_{o_n} \geq \sqrt{a_n} -1] \leq 1/2 
\end{align}
(observe that the distribution of $\tau_{o_n}$ depends on $a_1, \ldots, a_{n-1}$, but not on $a_n$). Denote the set of vertices in the $n$-th tree at distance $l$ from $o_n$ by $S_l$, $l \geq 1$. Observe that by an elementary estimate on one-dimensional biased random walk, whenever the random walk reaches a previously unvisited level $S_l$ at some vertex $x \in S_l$, the probability that the random walk never returns to $S_l$ until time $a_n$ and thereby leaves $x$ in $\partial_i R_{a_n}$ is at least $1/3$ (see, for example, \cite{D05}, Chapter 4, Example~7.1, p.~271-272). Hence,
\begin{align*}
E_{o}[|\partial_i R_{a_n}|] &\geq \frac{1}{3} \sum_{l=0}^{a_n} P_{o}[\tau_{S_l} \leq a_n] \\
& \geq c \sum_{l=0}^{a_n} P_{o}[\tau_{S_1} \leq \sqrt{a_n}] P_{S_1}[\tau_{S_l} \leq a_n - \sqrt{a_n} ] \text{ (Markov prop.)}\\
&\geq c \sum_{l=0}^{a_n} P_{S_1} [\tau_{S_l} \leq a_n - \sqrt{a_n}] \text{ (by \eqref{exrec1})}\\
&\geq c E_{S_1}[d(o_n, X_{[a_n - \sqrt{a_n}] \wedge \tau_{o_n}})].
\end{align*}
Since $\big(d(o_n, X_{k \wedge \tau_{o_n}}) - (k \wedge \tau_{o_n})/3 \big)_{k \geq 0}$ is a martingale (see again \cite{D05}, p.~272), the right-hand side is bounded from below by
$$c E_{S_1}[(a_n-\sqrt{a_n}) \wedge \tau_{o_n}] \geq c (a_n - \sqrt{a_n}) P_{S_1} [\tau_{o_n} > a_n] \geq c' a_n \geq c' \langle R_{a_n} \rangle,$$
using again an elementary estimate on one-dimensional biased random walk for the second inequality. Hence, Corollary~\ref{c:bdry} shows that $\ent_o(R_n)/\size$ does not tend to $0$ for the recurrent graph $G$ defined above.
}
\end{remark}

\section{Vertex-transitive graphs} \label{s:dich}

This final section contains the proof of Theorem~\ref{t:dich} on the dichotomy for vertex-transitive graphs. Note that, due to vertex-transitivity, $\ent_o(R_n)$, and $\size$ do not depend on $o$, so $o$ can be omitted from the notation. We first deduce the estimate in the recurrent case.

\begin{corollary} \label{c:entub}
Let $G$ be any vertex-transitive and recurrent graph. Then
\begin{align}
\frac{\entt}{\sizet} \longrightarrow 0, \text{ as } n \to \infty.
\end{align}
\end{corollary}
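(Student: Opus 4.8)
The goal of Corollary~\ref{c:entub} is to show that on a vertex-transitive and recurrent graph, $\ent(R_n)/\langle R_n \rangle \to 0$. The natural plan is to deduce this from Theorem~\ref{t:entub}, so the work reduces to checking that a vertex-transitive recurrent graph satisfies the hypotheses of that theorem: uniform recurrence \eqref{e:unire}, bounded degrees, and the subexponential growth condition \eqref{e:vg}.

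First I would observe that bounded degrees and uniform recurrence are essentially immediate from vertex-transitivity. Indeed, vertex-transitivity forces every vertex to have the same degree, so degrees are trivially bounded, and it also makes $P_o[o \notin \{X_1,\ldots,X_n\}]$ independent of $o$. Thus the supremum in \eqref{e:unire} is just the single quantity $P_o[o \notin \{X_1,\ldots,X_n\}]$, which tends to $0$ precisely because $G$ is recurrent; hence uniform recurrence holds automatically. Similarly, $|\partial_e B(x,r)|$ does not depend on $x$, so the supremum in \eqref{e:vg} is again a single sequence $|\partial_e B(o,r)|$ in $r$.

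The substantive step is verifying the growth condition \eqref{e:vg}, namely that for every $\epsilon > 0$ one has $|\partial_e B(o,r)| \leq e^{\epsilon r}$ for infinitely many $r$. I expect this to be the main obstacle, and I would argue by contradiction: if \eqref{e:vg} failed, there would be some $\epsilon > 0$ and some $r_0$ with $|\partial_e B(o,r)| > e^{\epsilon r}$ for all $r \geq r_0$, forcing $|B(o,r)|$ to grow at least exponentially in $r$. The plan is to show that exponential volume growth is incompatible with recurrence on a vertex-transitive graph. This is a known rigidity fact for transitive graphs: a recurrent vertex-transitive graph must have volume growth at most quadratic (one can invoke that a vertex-transitive graph with superlinear—indeed at least cubic—growth is transient, a consequence of the classification of growth of transitive graphs together with isoperimetric/heat-kernel estimates). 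I would therefore cite the appropriate structural result to rule out exponential growth, so that \eqref{e:vg} holds and Theorem~\ref{t:entub} applies.

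Once all three hypotheses are confirmed, the conclusion follows directly: Theorem~\ref{t:entub} gives $\sup_{o} \ent_o(R_n)/\langle R_n \rangle_o \to 0$, and dropping the (redundant) supremum over the single vertex-transitive value yields $\ent(R_n)/\langle R_n \rangle \to 0$. The only genuinely nontrivial input is the growth estimate, and if a clean citation for ``recurrent transitive $\Rightarrow$ subexponential growth'' is available, the proof becomes short; otherwise I would need to supply the heat-kernel or isoperimetric argument showing exponential growth implies transience.
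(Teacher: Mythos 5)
Your proposal is correct and follows essentially the same route as the paper: reduce to Theorem~\ref{t:entub}, note that vertex-transitivity makes bounded degrees and the suprema automatic so that recurrence (with monotone convergence) gives \eqref{e:unire}, and verify \eqref{e:vg} by the rigidity fact that a vertex-transitive graph with sufficiently fast volume growth must be transient. The paper supplies the citation you left open in a slightly more elementary way, avoiding the growth classification of transitive graphs: it invokes quasi-homogeneity and the resulting isoperimetric inequality from \cite{W00} (Theorem~4.18, Corollary~4.16, Theorem~5.2) to conclude that recurrence forces $\liminf_r |B(o,r)|/r^3 = 0$, from which \eqref{e:vg} follows immediately.
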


\begin{proof}
By Theorem~\ref{t:entub}, we only need to check conditions \eqref{e:unire} and \eqref{e:vg}. Due to vertex-transitivity, the suprema become superfluous in both conditions. The assertion \eqref{e:unire} is thus a consequence of recurrence and monotone convergence. Simple random walk on a vertex-transitive graph is quasi-homogeneous, see \cite{W00}, Theorem~4.18, p.~47. As a consequence, if $\liminf_r |B(o,r)|/r^3>0$, then $G$ satisfies a three-dimensional isoperimetric inequality, which implies in particular that the random walk is transient (we refer to \cite{W00}, Corollary~4.16, p.~47, and Theorem~5.2, p.~49, for the proofs of these claims). Hence, we must have $\liminf_r |B(o,r)|/r^3=0$, from which \eqref{e:vg} immediately follows. 
\end{proof}

Theorem~\ref{t:dich} follows:

\begin{proof}[Proof of Theorem~\ref{t:dich}.]
Since the infimum in condition \eqref{e:ut} is superfluous for vertex-transitive graphs, every vertex-transitive and transient graph is uniformly transient. Theorem~\ref{t:entlb} hence shows that $$\liminf_{n \to \infty} \entt/\sizet > 0$$ in the transient case. Corollary~\ref{c:entub} shows that $\entt/\sizet$ tends to $0$ for vertex-transitive recurrent graphs. Since every vertex-transitive graph is either transient or recurrent, this proves Theorem~\ref{t:dich}.
\end{proof}

In order to answer the question from \cite{BKYY09} mentioned after Theorem~\ref{t:dich}, it remains to prove that $\sizet$ grows linearly in $n$ for vertex-transitive transient graphs. This is surely well-known, but we could not find a proof in the literature, so we give a proof here. We denote the Green function of the simple random walk (evaluated on the diagonal) by
\begin{align*}
g(x) = E_x \Biggl[ \sum_{k=0}^\infty \mathbf{1}_{\{X_k=x \}} \Biggr], \text{ for } x \in V,
\end{align*}
and the Green function of the random walk killed after $n$ steps by
\begin{align*}
g^n(x) = E_x \Biggl[ \sum_{k=0}^n \mathbf{1}_{\{X_k=x \}} \Biggr], \text{ for } x \in V.
\end{align*}

\begin{proposition} \label{p:Rllnt}
Let $G$ be any transient graph and let $$e(x) = P_x[\tau^+_x = \infty] \in (0,1)$$ be the escape probability from vertex $x \in V$. Then
\begin{align}
\label{e:Rllnt1}
\inf_{o \in V} e(o) \leq \liminf_{n \to \infty} \inf_{o \in V} \frac{\size}{n}.
\end{align}
Moreover, if $G$ is vertex-transitive and transient, then
\begin{align}
\label{e:Rllnt2} \frac{\sizet}{n} \longrightarrow e(o)>0, \text{ as } n \to \infty.
\end{align}
\end{proposition}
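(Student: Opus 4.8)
The plan is to count each visited vertex at the \emph{last} time the walk occupies it before time $n$, which converts the expected range into a sum of non-return probabilities. Since every vertex of $R_n$ is occupied at a unique last time in $\{0,1,\dots,n\}$, I would write
\begin{align*}
|R_n| = \sum_{k=0}^n \mathbf{1}\{X_l \neq X_k \text{ for all } k < l \leq n\},
\end{align*}
take $E_o$, and apply the strong Markov property at each time $k$. Writing $e_m(x) = P_x[\tau_x^+ > m]$ for the probability of avoiding a return to $x$ during $m$ steps, this yields the identity
\begin{align*}
\size = \sum_{k=0}^n E_o\bigl[ e_{n-k}(X_k) \bigr].
\end{align*}
This representation is the crux; once it is in place, both assertions follow by elementary estimates.

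For the lower bound \eqref{e:Rllnt1}, I would simply bound $e_m(x) \geq P_x[\tau_x^+ = \infty] = e(x) \geq \inf_{y \in V} e(y)$ for every $m$ and every $x$, using $\{\tau_x^+ > m\} \supseteq \{\tau_x^+ = \infty\}$. Summing over $k$ gives $\size \geq (n+1)\inf_{y} e(y)$ uniformly in $o$, so dividing by $n$ and passing to $\liminf_n \inf_o$ produces \eqref{e:Rllnt1}.

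For the vertex-transitive case \eqref{e:Rllnt2}, the key extra input is that the law of $\tau_x^+$ under $P_x$ does not depend on $x$ (an automorphism carries the walk from one vertex to any other), so the summand $e_{n-k}(X_k)$ is the deterministic constant $e_{n-k}(o)$ and the outer expectation drops out. Reindexing $m = n-k$,
\begin{align*}
\sizet = \sum_{m=0}^n e_m(o) = \sum_{m=0}^n P_o[\tau_o^+ > m].
\end{align*}
Here $e_m(o) \downarrow e(o) = P_o[\tau_o^+ = \infty]$ as $m \to \infty$ by continuity of $P_o$ from above, and transience is exactly the assertion $e(o) > 0$ (equivalently $g(o) = 1/e(o) < \infty$). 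A Cesàro argument applied to the convergent sequence $(e_m(o))_{m \geq 0}$ then gives $\sizet/(n+1) \to e(o)$, whence $\sizet/n \to e(o) > 0$.

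I expect the only genuinely delicate point to be the justification of the last-exit identity together with the correct bookkeeping when applying the Markov property, so that the no-return window is exactly $n-k$ steps; the remaining steps—the pointwise bound $e_m \geq e$, the collapse of the expectation under vertex-transitivity, and the Cesàro limit—are routine.
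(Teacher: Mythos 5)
Your proposal is correct, but it takes a genuinely different route from the paper. You use the classical last-exit decomposition: counting each visited vertex at the last time it is occupied gives the exact identity $\size = \sum_{k=0}^n E_o[e_{n-k}(X_k)]$ with $e_m(x)=P_x[\tau_x^+>m]$, from which the uniform lower bound follows by $e_m \geq e \geq \inf_y e(y)$, and the vertex-transitive limit follows because $e_m(\cdot)$ is then a constant function, so the identity collapses to $\sizet = \sum_{m=0}^n e_m(o)$ and a Ces\`aro argument finishes. (Minor quibbles: the Markov property you invoke at the deterministic time $k$ is the simple, not strong, one; and you should note that the paper's distance-preserving bijections are graph automorphisms, which is what makes the law of $\tau_x^+$ under $P_x$ vertex-independent.) The paper instead counts time rather than vertices: it writes $n+1=\sum_x E_o[N_x^n]$ for the occupation numbers $N_x^n$, bounds $E_o[N_x^n] \leq P_o[\tau_x \leq n]\, g(x)$ by the strong Markov property at the first hitting time, and uses the Green function identity $g(x)=1/e(x)$; this yields \eqref{e:Rllnt1} directly, but for the matching upper bound in the vertex-transitive case it needs an extra dilation trick — running the walk for $[\lambda n]$ steps, bounding below by visits to vertices hit by time $n$ via the truncated Green function $g^{[\lambda n]-n}(o)$, and then letting $\lambda \downarrow 1$. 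Your single identity handles both directions at once and avoids the $\lambda$-trick entirely, which makes it arguably cleaner; the paper's first-entrance/Green-function bookkeeping has the mild advantage of never requiring the last-exit time bookkeeping and of exhibiting the constant explicitly as $1/\sup_x g(x)$ in the non-transitive bound.
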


\begin{proof}
Let $G$ be any transient graph and $o \in V$. By the Markov property applied at time $1$, $g(x) = 1 + (1-e(x))g(x),$ hence
\begin{align}
\label{Rllnt1}
g(x) = 1/e(x), \text{ for any } x \in V.
\end{align}
For $n \geq 1$ and $x \in V$, we denote by $N^n_x$ the total number of visits to $x$ in the first $n$ steps, 
\begin{align*}
N^n_x = \sum_{0 \leq k \leq n} \mathbf{1}_{\{X_k=x\}}.
\end{align*}
Summation over all vertices yields the total number of steps:
\begin{align}
\label{Rllnt2}
n+1 = \sum_{x \in V} N^n_x.
\end{align}
Taking expectations in this equation and applying the strong Markov property at time $\tau_x$, we obtain that
\begin{align}
\label{Rllnt3}
n+1 &= \sum_{x \in V} E_o[N^n_x]\\
&\leq \sum_{x \in V} P_o[\tau_x \leq n] g(x) \nonumber\\
&\leq \size \sup_{x \in V} g(x). \nonumber
\end{align}
The estimate \eqref{e:Rllnt1} is trivial if $\inf_{o \in V}e(o)=0$ and follows from \eqref{Rllnt1} and \eqref{Rllnt3} otherwise. 

Let now $G$ be a vertex-transitive transient graph. In particular, \eqref{e:Rllnt1} holds without the infimum on either side. Replacing $n$ by $[\lambda n]$ with $\lambda > 1$ in \eqref{Rllnt2}, we have for $n \geq c_\lambda$,
\begin{align*}
[\lambda n] + 1 &\geq \sum_{x \in V} E_o[N^{[\lambda n]}_x \mathbf{1}_{\{\tau_x \leq n\}} ] \\
&\geq \sum_{x \in V} P_o[\tau_x \leq n] g^{[\lambda n]-n}(o)\\
&= \size  g^{[\lambda n]-n}(o).
\end{align*}
Letting $n$ tend to infinity and using \eqref{Rllnt1}, it follows that for any $\lambda >1$,
\begin{align*}
\limsup_{n \to \infty} \frac{\size}{n} \leq \lambda e(o).
\end{align*}
We now let $\lambda$ tend to $1$ and together with \eqref{e:Rllnt1} obtain \eqref{e:Rllnt2}.
\end{proof}

\begin{remark} \label{r:answer}
\textup{
Theorem~\ref{t:entlb} and \eqref{e:Rllnt1} prove that $\ent_o(R_n)$ grows linearly in $n$ for all uniformly transient graphs with bounded degrees, while Theorem~\ref{t:entub} in particular proves that $\ent_o(R_n)$ grows sublinearly in $n$ on uniformly recurrent graphs satisfying the growth condition~\eqref{e:vg}.
}
\end{remark}

\begin{remark} \label{r:ign}
\textup{
The example presented in Remark~\ref{r:exrec} does not satisfy the volume growth assumption \eqref{e:vg} and hence does not show that \eqref{e:vg} is actually necessary in Theorem~\ref{t:entub}. We do not know of an example showing necessity of \eqref{e:vg}, or indeed if there even exists a graph satisfying \eqref{e:unire} but not \eqref{e:vg}.
}
\end{remark}

\begin{remark}
\textup{
Given the results of the present work, it is natural to wonder whether one can obtain more precise estimates on $\ent_o(R_n)$ on vertex-transitive graphs or even prove an analogue of Shannon's theorem on the almost-sure behavior of $\log(p_o(R_n))$.
}
\end{remark}

\end{document}